\theoremstyle{plain}
\newtheorem{thm}{Theorem}[section]
\newtheorem{lem}[thm]{Lemma}
\newtheorem{dfn}[thm]{Definition}
\newtheorem{prop}[thm]{Proposition}
\newtheorem{rmk}[thm]{Remark}
\def\D{\mathrm{D}}
\def\R{\mathscr{R}}
\def\T{\mathrm{T}}
\def\d{\mathrm{d}}
\def\r{\mathrm{R}}
\def\Cset{\mathbb{C}}
\def\Nset{\mathbb{N}}
\def\Qset{\mathbb{Q}}
\def\Rset{\mathbb{R}}
\def\Zset{\mathbb{Z}}
\def\Im{\mathrm{Im}}
\def\Re{\mathrm{Re}}
\def\Span{\mathrm{span}}
\def\epsilon{\varepsilon}
\def\theequation{\arabic{section}.\arabic{equation}}
\begin{document}


\title[Nonintegrability near degenerate equilibria]%
{Nonintegrability of dynamical systems near degenerate equilibria}

\author{Kazuyuki Yagasaki}

\address{Department of Applied Mathematics and Physics, Graduate School of Informatics,
Kyoto University, Yoshida-Honmachi, Sakyo-ku, Kyoto 606-8501, JAPAN}
\email{yagasaki@amp.i.kyoto-u.ac.jp}

\date{\today}
\subjclass[2020]{37J30; 37G05; 34M35; 34C20; 34A05}
\keywords{Nonintegrability; Poincare-Dulac normal form; degenerate equilibrium;
 fold-Hopf bifurcation; double-Hopf bifurcation; planar system; Rossler system;
 coupled van der Pol oscillators}

\begin{abstract}
We prove that general three- or four-dimensional systems 
 are real-analytically nonintegrable near degenerate equilibria in the Bogoyavlenskij sense
 under additional weak conditions
 when the Jacobian matrices have a zero and pair of purely imaginary eigenvalues
 or two incommensurate pairs of purely imaginary eigenvalues at the equilibria.
For this purpose, we reduce their integrability
 to that of the corresponding Poincar\'e-Dulac normal forms
 and further to that of simple planar systems,
 and use a novel approach for proving the analytic nonintegrability of planar systems. 
Our result also implies that general three- and four-dimensional systems
 exhibiting fold-Hopf and double-Hopf codimension-two bifurcations, respectively,
 are real-analytically nonintegrable under the weak conditions.
To demonstrate  these results,
 we give two examples for the R\"ossler system and coupled van der Pol oscillators.
\end{abstract}
\maketitle


\section{Introduction}

In this paper we study the nonintegrability of systems of the form
\begin{equation}
\dot{x}=f(x),\quad
x\in\Rset^n,
\label{eqn:sys}
\end{equation}
where $n=3$ or $4$ and $f(x;\mu)$ is analytic.
We assume that $x=0$ is an equilibrium, i.e., $f(0)=0$,
 and the Jacobian matrix $\D f(0)$ of $f(x)$ at $x=0$ has
 (I) a zero and pair of purely imaginary eigenvalues, $\lambda=0,\pm i\omega$ ($\omega>0$), for $n=3$
 or (II) two pairs of  purely imaginary eigenvalues, $\pm i\omega_j$ ($\omega_j>0$), $j=1,2$,
 with $\omega_1/\omega_2\not\in\Qset$ for $n=4$.
Here we adopt the following concept of integrability in the Bogoyavlenskij sense \cite{B98}.

\begin{dfn}[Bogoyavlenskij]
\label{dfn:1a}
For any integer $n\ge 1$,
 the $n$-dimensional system \eqref{eqn:sys}
 is called \emph{$(m,n-m)$-integrable} or simply \emph{integrable}
 for some integer $m\in[1,n]$ 
 if there exist $m$ vector fields $f_1(x)(:= f(x)),f_2(x),\dots,f_m(x)$
 and $n-m$ scalar-valued functions $F_1(x),\dots,F_{n-m}(x)$ such that
 the following two conditions hold$:$
\begin{enumerate}
\setlength{\leftskip}{-1.8em}
\item[\rm(i)]
$f_1(x),\dots,f_m(x)$ are linearly independent almost everywhere
 and commute with each other,
 i.e., $[f_j,f_k](x):=\D f_k(x)f_j(x)-\D f_j(x)f_k(x)\equiv 0$ for $j,k=1,\ldots,m$,
 where $[\cdot,\cdot]$ denotes the Lie bracket$\,;$
\item[\rm(ii)]
The derivatives $\D F_1(x),\dots, \D F_{n-m}(x)$ are linearly independent almost everywhere
 and $F_1(x),\dots,F_{n-m}(x)$ are first integrals of $f_1, \dots,f_m$,
 i.e., $\D F_k(x)^\T f_j(x)\linebreak\equiv 0$ for $j=1,\ldots,m$ and $k=1,\ldots,n-m$,
 where the superscript `{\small\,$\T$}' represents the transpose operator.
\end{enumerate}
We say that the system is \emph{analytically} $($resp. \emph{meromorphically}$)$
 \emph{integrable}
 if the first integrals and commutative vector fields are analytic $($resp. meromorphic$)$. 
\end{dfn}
 
If an $\ell$-degree-of-freedom Hamiltonian system with $\ell\ge 1$
 is  integrable in the Liouville sense \cite{A89,M99},
 then so is it in the Bogoyavlenskij sense,
 since it has not only $\ell$ functionally independent first integrals
 but also $\ell$ linearly independent commutative (Hamiltonian) vector fields
 generated by the first integrals.
Thus, the Bogoyavlenskij-integrability in Definition~\ref{dfn:1a}
 is considered as a generalization of Liouville-integrability for Hamiltonian systems.

Under our assumptions, by power series changes of coordinates,
 the system \eqref{eqn:sys} is formally transformed to
\begin{equation}
\begin{split}
\dot{x}_1=&-\omega x_2+g_1(x_1^2+x_2^2,x_3)x_1-g_2(x_1^2+x_2^2,x_3)x_2,\\
\dot{x}_2=&\omega x_1+g_2(x_1^2+x_2^2,x_3)x_1+g_1(x_1^2+x_2^2,x_3)x_2,\\
\dot{x}_3=&g_3(x_1^2+x_2^2,x_3)
\end{split}
\label{eqn:pd1}
\end{equation}
with $x=(x_1,x_2,x_3)$ for case~(I), and to 
\begin{equation}
\begin{split}
\dot{x}_1=&-\omega_1 x_2+h_1(x_1^2+x_2^2,x_3^2+x_4^2)x_1-h_2(x_1^2+x_2^2,x_3^2+x_4^2)x_2,\\
\dot{x}_2=&\omega_1 x_1+h_2(x_1^2+x_2^2,x_3^2+x_4^2)x_1+h_1(x_1^2+x_2^2,x_3^2+x_4^2)x_2,\\
\dot{x}_3=&-\omega_2 x_4+h_3(x_1^2+x_2^2,x_3^2+x_4^2)x_3-h_4(x_1^2+x_2^2,x_3^2+x_4^2)x_4,\\
\dot{x}_4=&\omega_2 x_3+h_4(x_1^2+x_2^2,x_3^2+x_4^2)x_3+h_3(x_1^2+x_2^2,x_3^2+x_4^2)x_4
\end{split}
\label{eqn:pd2}
\end{equation}
with $x=(x_1,x_2,x_3,x_4)$ for case~(II),
 where $g_j(y_1,y_2)$, $j=1,2,3$, and $h_j(y_1,y_2)$, $j=1,2,3,4$,
 are formal power series of $y_1$ and $y_2$, which may not be convergent,
 such that $g_j(0,0),\D_{y_2}g_3(0,0)=0$, $j=1,2,3$, and $h_j(0,0)=0$, $j=1,2,3,4$.
See, e.g., Section~3.1 of \cite{HI11} for the derivation of \eqref{eqn:pd1} and \eqref{eqn:pd2}.
Equations \eqref{eqn:pd1} and \eqref{eqn:pd2} are, respectively, represented as
\begin{equation}
\begin{split}
\dot{x}_1=&-\omega x_2+\alpha_1x_1x_3-\alpha_2 x_2x_3,\\
\dot{x}_2=&\omega x_1+\alpha_2 x_1x_3+\alpha_1 x_2x_3,\\
\dot{x}_3=& \alpha_3(x_1^2+x_2^2)+\alpha_4x_3^2
\end{split}
\label{eqn:pd1t}
\end{equation}
up to $O(|x|^2)$ for case~(I), and as
\begin{equation}
\begin{split}
\dot{x}_1
=& -\omega_1 x_2+(\alpha_1(x_1^2+x_2^2)+\alpha_2(x_3^2+x_4^2))x_1\\
& -(\beta_1(x_1^2+x_2^2)+\beta_2(x_3^2+x_4^2))x_2,\\
\dot{x}_2
=& \omega_1 x_1+(\beta_1(x_1^2+x_2^2)+\beta_2(x_3^2+x_4^2))x_1\\
& +(\alpha_1(x_1^2+x_2^2)+\alpha_2(x_3^2+x_4^2))x_2,\\
\dot{x}_3
=& -\omega_2 x_4+(\alpha_3(x_1^2+x_2^2)+\alpha_4(x_3^2+x_4^2))x_3,\\
& -(\beta_3(x_1^2+x_2^2)+\beta_4(x_3^2+x_4^2))x_4,\\
\dot{x}_4
=& \omega_2 x_3+(\beta_3(x_1^2+x_2^2)+\beta_4(x_3^2+x_4^2))x_3\\
& +(\alpha_3(x_1^2+x_2^2)+\alpha_4(x_3^2+x_4^2))x_4
\end{split}
\label{eqn:pd2t}
\end{equation}
up to $O(|x|^3)$ for case~(II),
 where $\alpha_j,\beta_j\in\Rset$, $j=1,\ldots,4$.

Our main results are stated as follows:

\begin{thm}
\label{thm:main1}
Let $n=3$ and suppose that the system~\eqref{eqn:sys} is transformed to \eqref{eqn:pd1}
 up to $O(|x|^2)$.
If $\alpha_1\neq 0$ and one of the following conditions holds,
 then the system~\eqref{eqn:sys}
 is not real-analytically integrable in the Bogoyavlenskij sense near the origin$\,:$
\begin{enumerate}
\setlength{\leftskip}{-1.8em}
\item[\rm(i)]
$\alpha_1\alpha_4>0\,;$
\item[\rm(ii)]
$\alpha_1\alpha_4<0$ and $\alpha_4/\alpha_1\not\in\Qset$.
\end{enumerate}
\end{thm}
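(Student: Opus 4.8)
The plan is to carry out the three-step reduction announced in the abstract: pass from the analytic system \eqref{eqn:sys} to its Poincar\'e--Dulac normal form \eqref{eqn:pd1}, then to a planar quotient system, and finally show that this planar system admits no analytic first integral under (i) or (ii). Since the normalizing transformation to \eqref{eqn:pd1} is only a formal power series, I would argue at the formal level: assuming \eqref{eqn:sys} is analytically Bogoyavlenskij-integrable, the (formal) normalizing change of coordinates transports the analytic first integrals and commuting vector fields of Definition~\ref{dfn:1a} to formal first integrals and commuting fields of \eqref{eqn:pd1}, preserving the independence and commutation conditions. It then suffices to contradict the \emph{formal} integrability of \eqref{eqn:pd1}.

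Next I would exploit the rotational symmetry of \eqref{eqn:pd1}. The field $v=-x_2\,\partial_{x_1}+x_1\,\partial_{x_2}$ generating the $S^1$-action commutes with the normal-form vector field, so in the invariants $\rho=x_1^2+x_2^2$, $z=x_3$ the dynamics descends to the planar quotient system
\[
\dot{\rho}=2g_1(\rho,z)\,\rho,\qquad \dot{z}=g_3(\rho,z),
\]
whose leading part is $\dot{\rho}=2\alpha_1\rho z$, $\dot{z}=\alpha_3\rho+\alpha_4 z^2$ by \eqref{eqn:pd1t}; the coefficient $\alpha_2$ lives only in the suppressed angular equation, which explains why it does not appear in the hypotheses. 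After averaging the integrability data over the $S^1$-orbits (which preserves equivariance and the first-integral property because $f$ is $S^1$-equivariant), the $(m,3-m)$-integrability of \eqref{eqn:pd1} descends to a $2$-dimensional Bogoyavlenskij structure on the quotient, whose core content is a nontrivial formal first integral of the planar system. I expect this reduction to be the main obstacle: one must track non-triviality and independence through the formal transformation and the averaging \emph{uniformly across the integrability types} $m=1,2,3$, and in particular rule out the purely commuting-field case on the quotient, where no first integral is handed to us directly.

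Finally, for the planar nonintegrability I would assign the weights $\D\rho=2$, $\D z=1$, under which the leading field $X_0=2\alpha_1\rho z\,\partial_\rho+(\alpha_3\rho+\alpha_4 z^2)\,\partial_z$ is quasi-homogeneous and raises the weighted degree by one. Solving $X_0\Phi=0$ explicitly through the substitution $\zeta=z^2$, which turns the orbit equation into the linear ODE $\D\zeta/\D\rho-(\alpha_4/\alpha_1)\,\zeta/\rho=\alpha_3/\alpha_1$, yields the first integral $\Phi_0=(z^2-c\rho)\,\rho^{-\alpha_4/\alpha_1}$ with $c=\alpha_3/(\alpha_1-\alpha_4)$ (and a logarithmic integral in the resonant subcase $\alpha_1=\alpha_4$, which falls under (i)). The lowest quasi-homogeneous part of any formal first integral of the planar system must itself be a quasi-homogeneous first integral of $X_0$, hence, by functional dependence on $\Phi_0$, a constant multiple of a power $\Phi_0^{\,s}=(z^2-c\rho)^s\rho^{-s\alpha_4/\alpha_1}$; this is a genuine polynomial only when $s\in\Zset_{\ge0}$ and $-s\alpha_4/\alpha_1\in\Zset_{\ge0}$, which forces $s=0$ under (i) $\alpha_1\alpha_4>0$ and is impossible for $s\neq0$ under (ii) $\alpha_1\alpha_4<0$ with $\alpha_4/\alpha_1\notin\Qset$ (the complementary rational case $\alpha_4/\alpha_1\in\Qset_{<0}$ does admit such a power, correctly leaving it outside the theorem). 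Thus no formal, a fortiori no analytic, first integral of the planar system exists under (i) or (ii), and the reduction then delivers the claimed real-analytic nonintegrability of \eqref{eqn:sys}.
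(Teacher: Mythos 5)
Your proposal correctly identifies the overall reduction chain (normal form $\to$ planar quotient $\to$ leading quasi-homogeneous part), and your weighted-degree argument against analytic or formal first integrals of the truncated planar system is essentially sound (it reproduces, and in the resonant subcase $\alpha_1=\alpha_4$ even refines, the corresponding step in the paper). But there is a fatal gap that you yourself flag as ``the main obstacle'' and then never close: Bogoyavlenskij integrability is not exhausted by first integrals. If the three-dimensional system is $(2,1)$- or $(3,0)$-integrable, what descends to the planar quotient is an analytic vector field $q$ commuting with the planar field $p$ and linearly independent of it --- i.e.\ $(2,0)$-integrability downstairs --- and your final step rules out only first integrals. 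This case cannot be dismissed by your argument even in principle: a planar field admitting an independent analytic commuting field has, via the integrating factor $1/\det(p,q)$, only a local and generally multivalued (power- or logarithm-type) first integral, so nonexistence of \emph{analytic} first integrals produces no contradiction with the existence of $q$. Closing exactly this case is the content of the paper's Section~3: Proposition~\ref{prop:3a} ties the known non-analytic first integral $Q$ of \eqref{eqn:p1t} to any hypothetical analytic commuting field $q$, forcing $\Delta=\det(p,q)$ to have the explicit form $\Delta(r,x_3)=Cr(\alpha_3r^2+(\alpha_4-\alpha_1)x_3^2)$ with $C\neq 0$; this pins down the lowest-order Taylor coefficients of $q$, and a direct computation then shows $[p,q]\neq 0$ at leading order, a contradiction. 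Without this step (or a substitute for it), your concluding sentence ``the reduction then delivers the claimed real-analytic nonintegrability'' does not follow.

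Two further, smaller problems. First, your $S^1$-averaging of the integrability data need not preserve nontriviality: the average of a nonconstant first integral can be constant, and the average of a commuting field can become parallel to the vector field of the system (or descend to zero on the quotient, as the rotation field itself does). The paper avoids this by the section-type construction of Proposition~\ref{prop:2a} (Proposition~2.1 of \cite{AY20}), which evaluates along solutions rather than averaging, and which transports \emph{both} first integrals and commuting fields with control on their nondegeneracy. Second, replacing Zung's theorem by a purely formal transport of the integrability data is delicate: the paper invokes Zung's theorem precisely so that the Poincar\'e--Dulac normal form is \emph{convergent} and the transported data are analytic, which is what legitimizes the analytic arguments used later (nonconstant solutions and fundamental matrices in the proof of Proposition~\ref{prop:3a}, and the ODE-based reduction of \cite{AY20}); in your all-formal setting you would need formal analogues of all of these, which you do not supply.
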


\begin{thm}
\label{thm:main2}
Let $n=4$ and suppose that the system~\eqref{eqn:sys} is transformed to \eqref{eqn:pd2}
 up to $O(|x|^3)$.
If $\alpha_1\neq\alpha_3$ and one of the following conditions holds,
 then the system~\eqref{eqn:sys} is not real-analytically integrable
 in the Bogoyavlenskij sense near the origin$\,:$
\begin{enumerate}
\setlength{\leftskip}{-1.6em}
\item[\rm(i)]
$\alpha_2\alpha_3-\alpha_1\alpha_4\neq 0\,;$
\item[\rm(ii)]
$\alpha_2\alpha_4>0\,;$
\item[\rm(iii)]
$\alpha_2\alpha_4<0$ and $\alpha_2/\alpha_4\not\in\Qset$.
\end{enumerate}
\end{thm}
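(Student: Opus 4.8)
The plan is to follow the reduction indicated by the normal‑form computation above. First I would pass from \eqref{eqn:sys} to its Poincar\'e--Dulac normal form \eqref{eqn:pd2}, and then use the two rotational symmetries of \eqref{eqn:pd2}: writing $y_1=x_1^2+x_2^2$ and $y_2=x_3^2+x_4^2$, a direct computation collapses \eqref{eqn:pd2} to the planar system
\begin{equation}
\dot y_1=2y_1h_1(y_1,y_2),\qquad \dot y_2=2y_2h_3(y_1,y_2),
\label{eqn:planar2}
\end{equation}
with $h_1=\alpha_1y_1+\alpha_2y_2+\cdots$ and $h_3=\alpha_3y_1+\alpha_4y_2+\cdots$, the angular variables being recovered by quadrature. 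Analytic Bogoyavlenskij integrability of \eqref{eqn:sys} near the origin then descends to analytic integrability of \eqref{eqn:planar2} near $(y_1,y_2)=(0,0)$, and the task reduces to showing that \eqref{eqn:planar2} admits no analytic first integral (the commuting--vector--field case is treated at the very end). Its leading part is the homogeneous quadratic Lotka--Volterra system $\dot y_1=2y_1(\alpha_1y_1+\alpha_2y_2)$, $\dot y_2=2y_2(\alpha_3y_1+\alpha_4y_2)$.

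The key step is a Darboux analysis of this leading system. Besides the axes $y_1=0,\ y_2=0$ it carries the invariant line $L:=(\alpha_3-\alpha_1)y_1+(\alpha_4-\alpha_2)y_2=0$, genuinely distinct from the axes precisely because $\alpha_1\neq\alpha_3$, with cofactors $K_1=2(\alpha_1y_1+\alpha_2y_2)$, $K_2=2(\alpha_3y_1+\alpha_4y_2)$ and $K_3=2(\alpha_1y_1+\alpha_4y_2)$. Solving $c_1K_1+c_2K_2+c_3K_3=0$ produces the (generically multivalued) Darboux first integral
\begin{equation}
H=y_1^{c_1}y_2^{c_2}L^{c_3},\quad
c_1=\alpha_4(\alpha_3-\alpha_1),\ \ c_2=\alpha_1(\alpha_2-\alpha_4),\ \ c_3=\alpha_1\alpha_4-\alpha_2\alpha_3,
\label{eqn:darboux}
\end{equation}
with $c_1+c_2+c_3=(\alpha_3-\alpha_1)(\alpha_4-\alpha_2)$. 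As the three lines are in general position, every local first integral of the leading system is a function of $H$; hence the lowest–order part of any analytic first integral of \eqref{eqn:planar2} is a homogeneous polynomial of this form, which forces $c_1,c_2,c_3$ to be commensurable and, after a common rescaling, to be nonnegative integers.

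Conditions (ii) and (iii) are then settled already at this leading order. For (iii) the ratio $(c_1+c_2+c_3)/c_1=1-\alpha_2/\alpha_4$ is irrational, so $c_1,c_2,c_3$ are incommensurable and \eqref{eqn:planar2} has not even a meromorphic first integral. For (ii) an elementary sign analysis of \eqref{eqn:darboux} suffices: one has $c_1(c_1+c_2+c_3)=(\alpha_3-\alpha_1)^2\alpha_4(\alpha_4-\alpha_2)$, which is negative when $\alpha_2/\alpha_4>1$, so $c_1$ and $c_1+c_2+c_3$ have opposite signs; the remaining range $0<\alpha_2/\alpha_4<1$ is excluded by a short argument with the explicit $c_i$ (and the boundary $\alpha_2=\alpha_4$, where $L$ degenerates, is a separate subcase). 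In every case $\alpha_2\alpha_4>0$ together with $\alpha_1\neq\alpha_3$ prevents $c_1,c_2,c_3$ from all being of one sign, so no polynomial, and hence no analytic, first integral exists.

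Condition (i) is the crux and I expect it to be the main obstacle. Here the hypothesis is exactly $c_3=\alpha_1\alpha_4-\alpha_2\alpha_3\neq0$, which only rules out a monomial first integral $y_1^py_2^q$ (such an integral exists iff $c_3=0$); the Darboux integral \eqref{eqn:darboux} may still be analytic at leading order, so the obstruction must be sought beyond the quadratic truncation. For this I would fix a particular solution of \eqref{eqn:planar2} along an invariant line, write the full tangential‑plus‑normal variational equation along it, and show that its differential Galois group fails to be virtually abelian exactly because $c_3\neq0$ --- the nondegeneracy of the leading quadratic part forcing nontrivial monodromy --- which contradicts integrability by a Morales--Ramis‑type criterion. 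Beyond this Galois computation, the delicate points are to verify that the possibly divergent normalizing transformation and the higher‑order terms $h_j$ cannot conspire to restore an analytic first integral, and to dispose of the Bogoyavlenskij cases $m\ge2$: a commuting analytic vector field for \eqref{eqn:planar2} yields an integrating factor and therefore the same constraints on the exponents $c_i$, so the argument closes uniformly.
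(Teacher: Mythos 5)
Your proposal follows the same architecture as the paper's proof: reduce \eqref{eqn:sys} to the Poincar\'e--Dulac normal form \eqref{eqn:pd2}, quotient by the two rotations, truncate to the leading part (your planar system in $y_j=r_j^2$ is exactly \eqref{eqn:p2t}), and analyze that planar system through a Darboux-type first integral, with commuting vector fields treated separately; your Darboux integral $y_1^{c_1}y_2^{c_2}L^{c_3}$ is (an inverse power of) the paper's integral $Q$. Within this common scheme, three steps you pass over quickly are genuine gaps. First, the normalizing transformation to \eqref{eqn:pd2} is in general divergent; the paper closes your ``delicate point'' with Zung's theorem (Theorem \ref{thm:zung}): analytic Bogoyavlenskij integrability forces a convergent analytic normalization. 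Second, the descent of integrability to the planar quotient is not ``a direct computation'': first integrals and, much less trivially, commuting vector fields must be pushed through the quotient, which is Proposition \ref{prop:2a} (Proposition 2.1 of \cite{AY20}) and needs nondegeneracy such as $\Theta(0,y)\neq 0$. Third, your closing claim that a commuting field yields an integrating factor ``and therefore the same constraints on the exponents'' is not an argument: the first integral produced by an integrating factor need be neither analytic nor meromorphic. The paper instead proves Proposition \ref{prop:3a}, uses the non-analyticity of $Q$ to pin down $\det(p,q)$ up to a constant factor, solves for the Taylor coefficients of $q$ to show its leading part is a multiple of the Euler field $(r_1,r_2)^{\T}$, and checks that the bracket with $p$ then cannot vanish.

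The decisive issue is condition (i), and here your diagnosis is correct --- but it exposes an error in the paper rather than a gap you could close. The paper's proof of Proposition \ref{prop:4b} asserts that an analytic first integral of \eqref{eqn:p2t} forces $\alpha_2\alpha_3-\alpha_1\alpha_4=0$ plus the sign/rationality conditions on $\alpha_2,\alpha_4$; as you note, these are exactly the conditions for a \emph{monomial} integral $y_1^{m_1}y_2^{m_2}$ and ignore the third invariant line $L$. Your three-exponent integral can indeed be polynomial while $c_3\neq 0$: take $(\alpha_1,\alpha_2,\alpha_3,\alpha_4)=\bigl(1,\tfrac12,-3,-1\bigr)$, so $(c_1,c_2,c_3)=\bigl(4,\tfrac32,\tfrac12\bigr)\propto(8,3,1)$ and, writing $y_j=r_j^2$,
\[
H=y_1^{8}y_2^{3}\bigl(8y_1+3y_2\bigr)=r_1^{16}r_2^{6}\bigl(8r_1^{2}+3r_2^{2}\bigr),\qquad
\frac{\dot{H}}{H}=8K_1+3K_2+K_3=(16-18+2)y_1+(8-6-2)y_2=0,
\]
so $H$ is an analytic first integral of \eqref{eqn:p2t} although $\alpha_1\neq\alpha_3$ and $\alpha_2\alpha_3-\alpha_1\alpha_4=-\tfrac12\neq0$ (conditions (ii), (iii) fail here). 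Thus Proposition \ref{prop:4b} fails for these parameters. Worse, the cubic normal form \eqref{eqn:pd2t} with these $\alpha_j$ and $\beta_j=0$ admits the analytic first integral $(x_1^2+x_2^2)^8(x_3^2+x_4^2)^3\bigl(8(x_1^2+x_2^2)+3(x_3^2+x_4^2)\bigr)$ together with the commuting rotational fields $(-x_2,x_1,0,0)^{\T}$ and $(0,0,-x_4,x_3)^{\T}$, hence is real-analytically $(3,1)$-integrable near the origin: a counterexample to Theorem \ref{thm:main2} under hypothesis (i) alone. Consequently your plan to settle (i) by a Morales--Ramis analysis of higher variational equations cannot succeed --- no argument can. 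What your leading-order analysis genuinely proves is nonintegrability whenever $(c_1,c_2,c_3)$ is not proportional to a vector of nonnegative integers; this covers (ii) and (iii) --- modulo the degenerate subcase $\alpha_2=\alpha_4$ that you flag but do not treat (the paper also overlooks it: its $Q$ degenerates to a constant there) --- but only part of (i).
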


We prove these theorems in Section~4.
The unfoldings of  \eqref{eqn:pd1t} and \eqref{eqn:pd2t},
\begin{equation}
\begin{split}
&\dot{x}_1=\nu x_1-\omega x_2+\alpha_1 x_1x_3-\alpha_2 x_2x_3,\\
&\dot{x}_2=\omega x_1+\nu x_2+\alpha_2 x_1x_3+\alpha_1 x_2x_3,\\
&\dot{x}_3=\mu+\alpha_3(x_1^2+x_2^2)+\alpha_4x_3^2
\end{split}
\label{eqn:fH}
\end{equation}
and
\begin{equation}
\begin{split}
\dot{x}_1
=& -\omega_1 x_2+(\nu+\alpha_1(x_1^2+x_2^2)+\alpha_2(x_3^2+x_4^2))x_1\\
& -(\beta_1(x_1^2+x_2^2)+\beta_2(x_3^2+x_4^2))x_2,\\
\dot{x}_2
=& \omega_1 x_1+(\beta_1(x_1^2+x_2^2)+\beta_2(x_3^2+x_4^2))x_1\\
& +(\nu+\alpha_1(x_1^2+x_2^2)+\alpha_2(x_3^2+x_4^2))x_2,\\
\dot{x}_3
=& -\omega_2 x_4+(\mu+\alpha_3(x_1^2+x_2^2)+\alpha_4(x_3^2+x_4^2))x_3,\\
& -(\beta_3(x_1^2+x_2^2)+\beta_4(x_3^2+x_4^2))x_4,\\
\dot{x}_4
=& \omega_2 x_3+(\beta_3(x_1^2+x_2^2)+\beta_4(x_3^2+x_4^2))x_3\\
& +(\mu+\alpha_3(x_1^2+x_2^2)+\alpha_4(x_3^2+x_4^2))x_4
\end{split}
\label{eqn:dH}
\end{equation}
represent normal forms of fold-Hopf and double-Hopf bifurcations, respectively,
 where $\mu,\nu\in\Rset$ are the control parameters:
At $(\mu,\nu)=(0,0)$, fold (saddle-node) and Hopf bifurcation curves meet for the former
 and two Hopf bifurcation curves for the latter.
Such codimension-two bifurcations are fundamental and interesting phenomena
 in dynamical systems
 and have been studied extensively
 since the seminal papers of Arnold \cite{A72} and Takens \cite{T74}.
See, e.g., \cite{GH83,HI11,K04} for the details.
In \cite{AY20,Y18a},
 the nonintegrability of the normal forms \eqref{eqn:fH} and \eqref{eqn:dH}
 in the Bogoyavlenskij sense were discussed:
They were shown to be meromorphically nonintegrable
 for almost all parameter values of $\alpha_j,\beta_j$, $j=1,2,3,4$,
 near the $x_3$-axis and the $(x_1,x_2)$- or $(x_3,x_4)$-plane, respectively,
 when $(\mu,\nu)\neq(0,0)$,
 while it was not determined whether they are nonintegrable or not when $(\mu,\nu)=(0,0)$.
(A special case of \eqref{eqn:dH} in which $\beta_j=0$, $j=1,2,3,4$,
 was actually considered in \cite{AY20} but their values do  not affect the conclusion,
 as in Theorem~\ref{thm:main2}.)
Our results show that
 not only the normal forms \eqref{eqn:fH} and \eqref{eqn:dH} with $(\mu,\nu)=(0,0)$
 but also the full system \eqref{eqn:sys} is real-analytically nonintegrable
 if the hypotheses of Theorems~\ref{thm:main1} and \ref{thm:main2} hold
 when it is transformed to \eqref{eqn:pd1} or \eqref{eqn:pd2}
 having the $O(|x|^2)$- or $O(|x|^3)$-truncation \eqref{eqn:pd1t} or \eqref{eqn:pd2t}.

We now describe some backgrounds and related work.
For a while, we consider the system \eqref{eqn:sys} 
 in a more general situation in which $n\neq 3,4$ is allowed but $x=0$ is still an equilibrium.

\begin{dfn}[Poincar\'e-Dulac normal form]
Change the coordinates in \eqref{eqn:sys} such that $\D f(0)$ is in Jordan normal form.
The system \eqref{eqn:sys} is called a \emph{Poincar\'e-Dulac (PD) normal form} if $[Sx,f]=0$,
 where $S$ is the semisimple part of $\D f(0)$,
 i.e., $S=\mathrm{diag}\lambda_j$, where $\lambda_j$, $j=1,\ldots,n$, are the eigenvalues of $\D f(0)$.
\end{dfn}

We easily see that the systems \eqref{eqn:pd1} and \eqref{eqn:pd2}
 are PD normal forms for \eqref{eqn:sys} under our assumptions
 although the power series $g_j$, $j=1,2,3$, and $h_j$, $j=1,2,3,4$,  may not be convergent.
Let $\lambda_j$,  $j=1,\ldots,n$, be eigenvalues of $\D f(0)$, and let
\[
\Zset_j^n=\{p=(p_1,\ldots,p_n)\in\Zset^n
 \mid p_j\ge -1,\,p_l\ge 0,\,l\neq j,\,p\neq 0\}
\]
for $j=1,\dots,n$.

\begin{dfn}[Resonance sets and degree]
Let
\[
\R_j=\left\{p\in\Zset_j^n\,\left|\,\sum_{l=1}^n\lambda_jp_j=0\right.\right\},\quad
j=1,\ldots,n,
\]
and let
\[
\R=\bigcup_{j=1}^n\R_j.
\]
We refer to $\R$ as the \emph{resonance set} of \eqref{eqn:sys} and to
\[
\gamma_\r=\dim_\Qset\Span_\Qset\R
\]
as the \emph{resonance degree}  of \eqref{eqn:sys}. 
\end{dfn}
For the PD normal forms \eqref{eqn:pd1} and \eqref{eqn:pd2}
 we easily see that the resonance sets are given by
\[
\R=\Span_\Nset\{(1,0,0),(0,1,1)\}\quad\text{and}\quad
\R=\Span_\Nset\{(1,1,0,0),(0,0,1,1)\},
\]
respectively, and the resonance degrees are $\gamma_\r=2$.
Yamanaka \cite{Y18b} proved the following result for the general case. 

\begin{thm}[Yamanaka]
If the resonance degree $\gamma_\r$ is less than two,
 then the PD normal form is analytically integrable.
Moreover, there exists an $n$-dimensional, analytically nonintegrable PD normal form
 with $\gamma_\r+1$ for $\gamma_\r\ge 2$.
\end{thm}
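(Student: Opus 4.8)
\emph{Strategy.} The plan is to read off from the Poincar\'e--Dulac structure a canonical abelian algebra of linear symmetries whose dimension is exactly $n-\gamma_\r$, and then to treat the two assertions separately. Put $\D f(0)=S$ in diagonal form over $\Cset$, so that every term of $f$ is a resonant monomial vector field $x^m e_j$ with $\langle\lambda,m\rangle=\lambda_j$, i.e. $m-e_j\in\R\cup\{0\}$; indeed $[Sx,f]=0$ is precisely this condition. For a linear ``diagonal'' field $X_c:=\mathrm{diag}(c)\,x$ one has the identity $[X_c,x^m e_j]=\langle c,m-e_j\rangle\,x^m e_j$, so $X_c$ commutes with $f$ exactly when $c\in(\Span_\Rset\R)^\perp$. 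This orthogonal complement has dimension $n-\gamma_\r$ and is defined over $\Qset$; since $\R$ is invariant under the conjugation that exchanges complex-conjugate eigencoordinates, it descends to $n-\gamma_\r$ genuinely real vector fields, all mutually commuting and all commuting with $f$.

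\emph{Integrability for $\gamma_\r\le1$.} I would use these symmetries directly. If $f=Sx$ is linear, then all $n$ fields $\mathrm{diag}(c)\,x$ commute with $f$ and are independent almost everywhere, so \eqref{eqn:sys} is $(n,0)$-integrable. Otherwise $f$ carries a genuine nonlinear resonant term, which forces $\R\neq\emptyset$ and hence $\gamma_\r=1$; the construction then yields $n-1$ commuting linear symmetries $X_{c^{(1)}},\dots,X_{c^{(n-1)}}$, and since $\Span_\Qset\R$ is one dimensional the resonant monomials are all powers of a single monomial $u$, so each ratio $f_l/x_l$ is a function of $u$ alone and each $X_{c^{(i)}}$ conserves $u$. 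Generically $f$ is transverse to the distribution spanned by the $X_{c^{(i)}}$, giving $n$ independent commuting fields $f,X_{c^{(1)}},\dots,X_{c^{(n-1)}}$ and hence $(n,0)$-integrability; in the remaining non-generic case $f$ is everywhere tangent to that distribution, but then $u$ is conserved and supplies the single first integral needed to complete an $(n-1,1)$-integrable structure. Either way no analyticity is lost and the normal form is integrable.

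\emph{Nonintegrability for $\gamma_\r\ge2$.} For each such value I would exhibit an explicit $n$-dimensional normal form of resonance degree exactly $\gamma_\r$ and certify its nonintegrability. The automatic symmetries now furnish only $n-\gamma_\r+1\le n-1$ commuting fields, so one is always short (this is precisely the situation of the truncations \eqref{eqn:pd1t} and \eqref{eqn:pd2t}, where $n-\gamma_\r+1=n-1$), and the content is to show that the deficit cannot be made up by any analytic first integral or further commuting field. Here I would invoke the Morales--Ramis theory in the non-Hamiltonian Bogoyavlenskij setting, in the form of the Ayoul--Zung obstruction: if the system were analytically integrable, then along a particular solution lying in a resonant invariant subspace the identity component of the differential Galois group of the variational equation would have to be abelian. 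I would design the example so that, using the two independent resonances to reduce the flow on that subspace to a simple planar system, the associated normal variational equation is of hypergeometric or Lam\'e type with a provably non-abelian identity component, contradicting integrability.

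\emph{Main obstacle.} The first assertion is essentially linear algebra once the commutation identity is in hand, the only delicate point being the bookkeeping of the non-generic case and of the real descent, so I expect it to be routine. The difficulty is concentrated in the second assertion: one must choose the example and the particular solution so that the Galoisian obstruction is simultaneously computable and non-abelian, verify the hypotheses needed to pass from ``abelian identity component'' back to the nonexistence of the claimed integrals and commuting fields, and confirm that the constructed field has resonance degree exactly $\gamma_\r$ and not accidentally more. Establishing the non-abelianness of the Galois group for a concrete variational equation is the crux of the whole argument.
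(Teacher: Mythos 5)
You should first note a point of reference: the paper does not prove this statement at all --- it is quoted as a known result of Yamanaka \cite{Y18b}, so there is no internal proof to compare against, and your attempt has to be judged on its own merits. On those merits, your first half is essentially the right idea: the identity $[X_c,x^me_j]=\langle c,m-e_j\rangle x^me_j$ is correct, and the $(n-\gamma_\r)$-dimensional family of diagonal linear symmetries is exactly the algebra underlying the torus-action picture of PD normal forms. Two points in the sketch still need repair, though. First, in your ``non-generic'' case for $\gamma_\r=1$ the proposed first integral $u=x^{p^0}$ is a Laurent monomial, hence only \emph{meromorphic} whenever the primitive resonance vector $p^0$ has a negative entry, while the theorem asserts \emph{analytic} integrability; you would need to check (and it is not automatic from what you wrote) that whenever $\langle p^0,\mathrm{diag}(x)^{-1}f(x)\rangle\equiv 0$ with $f$ genuinely nonlinear, the constraints $kp^0+e_l\ge 0$ on admissible resonant monomials force $p^0\ge 0$, so that $u$ is actually analytic in the only case where it is needed. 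Second, the real descent does not produce real diagonal rescalings for real $c$; the real form consists of conjugation-compatible fields (including rotation fields on conjugate pairs), a fixable but nontrivial bookkeeping step.

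The genuine gap is the second assertion. The theorem demands, for each $\gamma_\r\ge 2$, an explicit $(\gamma_\r+1)$-dimensional PD normal form \emph{together with a proof} of its analytic nonintegrability, and your proposal constructs no example: the entire content is deferred to an unspecified variational equation ``of hypergeometric or Lam\'e type with a provably non-abelian identity component.'' That is a research plan, not a proof, and the plan points in a direction that this paper's own context flags as problematic. The truncations \eqref{eqn:pd1t} and \eqref{eqn:pd2t}, which you cite as ``precisely the situation,'' are degenerate normal forms with all eigenvalues on the imaginary axis, and for exactly these the Galois-theoretic machinery (Morales--Ramis, Ayoul--Zung) used in \cite{AY20,Y18a} settled only the unfolded cases $(\mu,\nu)\neq(0,0)$, leaving $(\mu,\nu)=(0,0)$ open; filling that gap by a \emph{non}-Galoisian route is the raison d'\^etre of the present paper. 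The workable template for your second half is its Sections~2--4: reduce to the planar systems \eqref{eqn:p1t} and \eqref{eqn:p2t} via Propositions~\ref{prop:p1t} and \ref{prop:p2t} and Theorem~\ref{thm:2b}, then argue directly as in Propositions~\ref{prop:4a} and \ref{prop:4b} --- the explicit non-analytic first integral $Q$ rules out analytic first integrals, Proposition~\ref{prop:3a} then pins down $\Delta(x)=\det(p(x),q(x))$ up to a constant, and the forced leading term of any would-be commuting field $q$ fails to commute. This elementary mechanism works exactly at the degenerate equilibrium, needs no differential Galois theory, and is the kind of argument by which such nonintegrable families of normal forms are actually certified.
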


Similar results for Hamiltonian systems are found in \cite{C12,D84,Y19,Z05}.
The above result does not exclude the analytic nonintegrability
 of \eqref{eqn:pd1} and \eqref{eqn:pd2}.
Actually, he gave a necessary and sufficient condition
 for \eqref{eqn:pd1} to be analytically $(1,2)$-integrable in \cite{Y18b}.
For example, if the system \eqref{eqn:pd1t} is analytically $(1,2)$-integrable,
 then $\alpha_1,\alpha_3,\alpha_4=0$.

On the other hand,
 Zung \cite{Z02} proved the following remarkable result
 on analytically integrable PD normal forms.

\begin{thm}[Zung]
\label{thm:zung}
Let $n\ge 1$ be any integer.
If the system \eqref{eqn:sys} is analytically integrable in the Bogoyavlenskij sense,
 then there exists an analytic change of coordinates
 under which it is transformed to a PD normal form.
\end{thm}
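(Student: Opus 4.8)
The plan is to reduce the assertion to the existence of an analytic torus action and then to linearize that action. Recall that, after putting $\D f(0)$ in Jordan form with semisimple part $S$, the formal Poincar\'e-Dulac theory provides a formal change of coordinates $\hat\Phi$ taking \eqref{eqn:sys} to a formal normal form $\hat f$ with $[Sx,\hat f]=0$. This condition says exactly that $\hat f$ is invariant under the flow $e^{tS}$; its compact (rotational) part, generated by the imaginary parts of the eigenvalues modulo the relations recorded in the resonance set $\R$, is a real torus $(\Sset^1)^d$ of some dimension $d\ge 1$ (the toric degree), and $\hat f$ carries the corresponding formal torus action. Thus the whole difficulty lies in upgrading this formal action to a genuine analytic action of $(\Sset^1)^d$ near $x=0$: once we have that, the full normal-form condition is recovered in the coordinates that linearize it.

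First I would use integrability to normalize the entire integrable structure simultaneously. Since \eqref{eqn:sys} is integrable in the sense of Definition~\ref{dfn:1a}, we are given analytic commuting vector fields $f_1,\dots,f_m$ and analytic first integrals $F_1,\dots,F_{n-m}$. Because the $f_j$ commute with $f$ and with each other and the $F_k$ are common first integrals, they share the resonance structure of $f$, so $\hat\Phi$ may be chosen to bring all of $f_1,\dots,f_m$ into normal form at once while rendering $F_1,\dots,F_{n-m}$ invariant under the formal torus action. This yields a formal commutative family and formal invariants, all compatible with a single formal $(\Sset^1)^d$-action.

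The heart of the argument, and the step I expect to be the main obstacle, is to prove that this formal torus action converges, i.e. arises from an \emph{analytic} action of $(\Sset^1)^d$ on a neighborhood of the origin preserving $f_1,\dots,f_m$ and $F_1,\dots,F_{n-m}$. Here analyticity of the integrable data is essential: the common level sets of $F_1,\dots,F_{n-m}$, foliated by the joint orbits of the commuting flows of $f_1,\dots,f_m$, constitute an analytic singular fibration by invariant tori near the equilibrium, and the analytic geometry of this fibration is what forces the normalizing series to have a positive radius of convergence. The delicate points are to produce the infinitesimal generators of the torus as honest analytic vector fields from the commuting family, to verify that their flows are genuinely periodic (so that one obtains a torus and not merely an $\Rset^d$-action), and to control the construction uniformly up to the fixed point $x=0$; this is exactly where boundedness of the invariant fibration near the origin must be combined with the commutation relations to sum the formal normalization.

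Finally, granted an analytic torus action whose linearization at $x=0$ is the rotational part of $e^{tS}$, I would invoke Bochner's linearization theorem for analytic actions of compact groups: there is an analytic change of coordinates conjugating the action to its linear part. In these coordinates every object invariant under the action is automatically in normal form; in particular $[Sx,f]=0$, since the surviving monomials are precisely the resonant ones, and the same transformation normalizes the commuting family $f_1,\dots,f_m$ and the first integrals $F_1,\dots,F_{n-m}$ as well. This produces the analytic coordinate change asserted in Theorem~\ref{thm:zung}.
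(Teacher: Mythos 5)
The paper contains no proof of this statement: Theorem~\ref{thm:zung} is quoted from Zung's paper [Z02] (\emph{Convergence versus integrability in Poincar\'e-Dulac normal forms}), and the present paper only uses it as a black box to reduce the nonintegrability of \eqref{eqn:sys} to that of the normal forms \eqref{eqn:pd1} and \eqref{eqn:pd2}. That said, your outline does track the strategy of Zung's actual proof: associate to the formal normal form a formal torus action of dimension equal to the toric degree, use the analytic integrable structure to show this action is convergent, and then linearize the resulting compact group action by Bochner's theorem, observing that invariance under the linearized action is precisely the condition $[Sx,f]=0$. So the architecture is right.

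The genuine gap is that the step you yourself identify as ``the heart of the argument'' --- convergence of the formal torus action --- is not argued at all; it is the entire content of Zung's theorem, since both the formal Poincar\'e-Dulac theory and Bochner linearization were classical long before [Z02]. Your paragraph on this point (``the analytic geometry of this fibration is what forces the normalizing series to have a positive radius of convergence'') restates the desired conclusion rather than proving it: nothing in the proposal produces the $d$ analytic vector fields generating the action, verifies $2\pi$-periodicity of their flows (so that one gets $(\Sset^1)^d$ rather than $\Rset^d$), or explains why divergence is excluded. In [Z02] this is done by a quantitative iteration: one normalizes to successively higher order on a shrinking family of polydiscs, uses the commuting vector fields $f_1,\dots,f_m$ and the first integrals $F_1,\dots,F_{n-m}$ of Definition~\ref{dfn:1a} to average the normalizing transformations (the invariant structure supplies the uniform geometric bounds), and checks that the composite transformation converges on a fixed smaller polydisc; the periodicity of the generators comes from the integer lattice underlying the resonance relations. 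A secondary, smaller gap: your claim that the commuting fields ``share the resonance structure of $f$'' and can be formally normalized simultaneously with the $F_k$ also requires proof, since the $f_j$ need not have the same semisimple linear part as $f$; what one actually shows is that a vector field commuting with $f$ is automatically invariant under the (formal) torus action attached to $f$, which is a lemma, not a triviality. As it stands, the proposal is a correct roadmap of Zung's proof with its only hard step left as a placeholder.
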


A similar result for Hamiltonian systems was obtained by Zung \cite{Z05}.
Theorem~\ref{thm:zung} also implies that
 the corresponding PD normal form is convergent and analytically integrable
 if the system~\eqref{eqn:sys} is analytically integrable.
Hence, the system \eqref{eqn:sys} is analytically nonintegrable
 if the corresponding PD normal form is divergent or analytically nonintegrable.
So we only have to prove the analytic nonintegrability of \eqref{eqn:pd1} and \eqref{eqn:pd2}
 for the proofs of Theorems~\ref{thm:main1} and \ref{thm:main2}.
In their proofs of the main theorems, we assume
 that the system \eqref{eqn:sys} is analytically integrable 
 and that the power series in \eqref{eqn:pd1} and \eqref{eqn:pd2} are convergent,
 and show that these assumptions yield contradictions.

For the problem on nonintegrability of dynamical systems,
 the Morales-Ramis theory \cite{M99,MR01} and its extension \cite{AZ10,MRS07}
 were developed and have produced numerous remarkable results.
See, e.g., \cite{MP09,M15,MR10} for such examples.
Recently, the author and his coworker also applied the techniques
 and obtained several results on the problem
 for nearly integrable systems in \cite{MY22,Y22a,Y22c},
 for the restricted three-body problems in \cite{Y21a,Y21b}
 and for an epidemic model in \cite{Y22b}.
Here we use a different approach without relying on the techniques.
In particular, a useful relation between first integral and commutative vector fields
 for proving the analytic nonintegrability of planar systems is provided.

The outline of this paper is as follows:
In Section~2 we reduce the nonintegrability of \eqref{eqn:pd1} and \eqref{eqn:pd2}
  to that of simple planar systems.
For this purpose, we use Proposition~2.1 of \cite{AY20},
 which enables us to reduce a special class of systems,
 including \eqref{eqn:pd1} and \eqref{eqn:pd2}, to planar systems,
 along with a simple but clever trick.
In Section~3 we provide the useful relation 
 on first integrals and commutative vector fields.
In Section~4 we prove the main theorems
 using the results of Sections~2 and 3.
Finally, to demonstrate  our results,
 we give two examples for the R\"ossler system \cite{CNV20,L14,K04,MBKPS20,ZY20}
 and coupled van der Pol oscillators \cite{CR88,IOSK04,LRS03,PFG14,RH80,SR82,SR00}
 in Section~5.
 

\section{Reduction to Simple Planar Systems}

In this section
 we reduce the nonintegrability of \eqref{eqn:pd1} and \eqref{eqn:pd2}
 to that of simple planar systems.

Using the change of coordinate $(x_1,x_2)=(r\cos\theta,r\sin\theta)$,
 we transform \eqref{eqn:pd1t} to
\begin{equation}
\dot r=g_1(r^2,x_3)r,\quad
\dot x_3=g_3(r^2,x_3),\quad
\dot\theta=\omega+g_2(r^2,x_3),
\label{eqn:p1}
\end{equation}
of which the $(r_1,r_2)$-components are independent of $\theta$.
Using the change of coordinates $(x_1,x_2)=(r_1\cos\theta_1,r_1\sin\theta_1)$
 and $(x_3,x_4)=(r_2\cos\theta_2,r_2\sin\theta_2)$,
 we also transform \eqref{eqn:pd2t} to
\begin{equation}
\begin{split}
&
\dot{r}_1=h_1(r_1^2,r_2^2)r_1,\quad
\dot{r}_2=h_3(r_1^2,r_2^2)r_2,\\
&
\dot{\theta}_1=\omega_1+h_2(r_1^2,r_2^2),\quad
\dot{\theta}_2=\omega_2+h_4(r_1^2,r_2^2)
\end{split}
\label{eqn:p2}
\end{equation}
of which the  $(r_1,r_2)$-components are independent of $\theta_1$ and $\theta_2$.
So we expect that one can reduce the nonintegrability of \eqref{eqn:pd1t} and \eqref{eqn:pd2t}
 to that of the $(r,x_3)$-components of \eqref{eqn:p1},
\begin{equation}
\dot r=g_1(r^2,x_3)r,\quad
\dot x_3=g_3(r^2,x_3),
\label{eqn:p10}
\end{equation}
and the $(r_1,r_2)$-components of \eqref{eqn:p2},
\begin{equation}
\dot{r}_1=h_1(r_1^2,r_2^2)r_1,\quad
\dot{r}_2=h_3(r_1^2,r_2^2)r_2,
\label{eqn:p20}
\end{equation}
respectively.
This is true in a more general situation as follows.

Let $m>0$ be an integer and consider $m+2$-dimensional systems of the form
\begin{equation}
\dot{x}=f_x(x,y),\quad
\dot{y}=f_y(x,y),\quad
(x,y)\in D,
\label{eqn:fg}
\end{equation}
where $D\subset\Cset^2\times\Cset^m$ is a region containing
 the $m$-dimensional $y$-plane $\{(0,y)\in\Cset^2\times\Cset^m\mid y\in\Cset^m\}$,
 and $f_x:D\to\Cset^2$ and $f_y:D\to\Cset^m$ are analytic.
Assume that by the change of coordinates
 $x=(x_1,x_2)=(r\cos\theta,r\sin\theta)$,
 Eq.~\eqref{eqn:fg} is transformed to
\begin{equation}
\dot{r}=R(r,y),\quad
\dot{y}=\tilde{f}_y(r,y),\quad
\dot{\theta}=\Theta(r,y),\quad
(r,y,\theta)\in\tilde{D}\times\Cset,
\label{eqn:Rg0}
\end{equation}
where $\tilde{D}\subset\Cset\times\Cset^m$ is a region
 containing the $m$-dimensional $y$-plane,
 and $R:\tilde{D}\to\Cset$, $\tilde{f}_y:\tilde{D}\to\Cset^m$
 and $\Theta:\tilde{D}\to\Rset$ are analytic.
Note that $\tilde{f}_y(r,y)=f_y(r\cos\theta,r\sin\theta,y)$.
We are especially interested in the $(r,y)$-components of \eqref{eqn:Rg0},
\begin{equation}
\dot{r}=R(r,y),\quad
\dot{y}=\tilde{f}_y(r,y),
\label{eqn:Rg}
\end{equation}
which are independent of $\theta$.
In this situation we have the following proposition.

\begin{prop}
\label{prop:2a}\
\begin{enumerate}
\setlength{\leftskip}{-1.8em}
\item[\rm(i)]
Suppose that Eq.~\eqref{eqn:fg} has a meromorphic first integral $F(x_1,x_2,y)$
 near $(x_1,x_2)\linebreak=(0,0)$,
 and let $\tilde{F}(r,\theta,y)=F(r\cos\theta,r\sin\theta,y)$.
If $\tilde{f}_{yj}(0,y)\neq 0$ for almost all $y\in\tilde{D}$ for some $j=1,\ldots,m$, then
\[
G(r,y)=\tilde{F}(r,\tilde{\theta}_j(y_j),y)
\]
is a meromorphic first integral of \eqref{eqn:Rg} near $r=0$,
 where $y_j$ and $\tilde{f}_{yj}(r,y)$ are the $j$-th components of $y$ and $\tilde{f}_y(r,y)$, respectively,
 and $\tilde{\theta}_j(y_j)$ represents the $\theta$-component of a solution to
\[
\frac{\d r}{\d y_j}=\frac{R(r,y)}{\tilde{f}_{yj}(r,y)},\quad
\frac{\d y_\ell}{\d y_j}=\frac{\tilde{f}_{y\ell}(r,y)}{\tilde{f}_{yj}(r,y)},\quad
\frac{\d\theta}{\d y_j}=\frac{\Theta(r,y)}{\tilde{f}_{yj}(r,y)},\quad\ell\neq j.
\]
\item[\rm(ii)]
Suppose that Eq.~\eqref{eqn:fg} has a  meromorphic commutative vector field
\[
v(x_1,x_2,y):=
\begin{pmatrix}
v_1(x_1,x_2,y)\\
v_2(x_1,x_2,y)\\
v_y(x_1,x_2,y)
\end{pmatrix}
\]
with $v_1,v_2:D\to\Cset$ and $v_y:D\to\Cset^m$ near $(x_1,x_2)=(0,0)$.
If $\Theta(0,y)\neq 0$ for almost all $y\in\tilde{D}$, then
\begin{align*}
&
\begin{pmatrix}
\tilde{v}_r(r,\theta,y)\\
\tilde{v}_y(r,\theta,y)
\end{pmatrix}\notag\\
&=
\begin{pmatrix}
v_1(r\cos\theta,r\sin\theta,y)\cos\theta+v_2(r\cos\theta,r\sin\theta,y)\sin\theta\\
v_y(r\cos\theta,r\sin\theta,y)
\end{pmatrix}
\end{align*}
is independent of $\theta$
 and it is a meromorphic commutative vector field of \eqref{eqn:Rg} near $r=0$.
\end{enumerate}
\end{prop}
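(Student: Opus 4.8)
The plan is to treat both parts by transporting the objects through the polar change of coordinates $\Phi:(r,\theta,y)\mapsto(r\cos\theta,r\sin\theta,y)$, which is a biholomorphism away from $r=0$ and hence carries first integrals to first integrals and commutative vector fields to commutative vector fields. Under $\Phi$ the system \eqref{eqn:fg} becomes \eqref{eqn:Rg0}, and the hypothesis that $R,\tilde f_y,\Theta$ are $\theta$-independent is equivalent to saying that the rotation field $\rho=\partial_\theta$ commutes with the transformed vector field $\tilde f=(R,\tilde f_y,\Theta)$. Throughout I write $w=(R,\tilde f_y)$ for the reduced field of \eqref{eqn:Rg} and $\mathcal L_w=R\partial_r+\tilde f_y\cdot\nabla_y$ for the associated derivation. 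A preliminary observation I would record is that $\theta$-independence of $R$ forces $f_x(0,y)=0$, so the plane $\{r=0\}$ is invariant, $R(0,y)=0$, and $w|_{r=0}=(0,\tilde f_y(0,y))$; this is where the nonvanishing hypotheses at $r=0$ will enter.

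For (i), note that $\tilde F=F\circ\Phi$ is a first integral of \eqref{eqn:Rg0}, i.e. $\mathcal L_w\tilde F+\Theta\,\partial_\theta\tilde F=0$. I look for a first integral of the reduced system of the form $G(r,y)=\tilde F(r,\Phi(r,y),y)$. A direct chain-rule computation, using this identity to eliminate $\mathcal L_w\tilde F$, gives $\mathcal L_w G=(\partial_\theta\tilde F)\,(\mathcal L_w\Phi-\Theta)$, so $G$ is a first integral of \eqref{eqn:Rg} precisely when $\Phi$ solves the linear transport equation $\mathcal L_w\Phi=\Theta$. Since $\tilde f_{yj}(0,y)\neq0$ for almost all $y$, I solve this by the method of characteristics using $y_j$ as the independent variable; the characteristic system is exactly the one displayed in the statement, and its $\theta$-component is $\tilde\theta_j$, so $\Phi=\tilde\theta_j$ does the job.

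For (ii), set $\tilde v=\Phi^{*}v$; it commutes with $\tilde f$, and its $(r,y)$-block is exactly $V_{ry}=(v_1\cos\theta+v_2\sin\theta,\;v_y)$, which is the pushforward $\mathrm d\pi(v)$ under the projection $\pi:(r,\theta,y)\mapsto(r,y)$. Extracting the $(r,y)$-components of $[\tilde v,\tilde f]=0$ and using $\partial_\theta\tilde f=0$ yields $[V_{ry},w]=\Theta\,\partial_\theta V_{ry}$. The whole content is the claim $\partial_\theta V_{ry}=0$. To obtain it I expand $V_{ry}$ in a Fourier series in $\theta$; because $w$ and $\Theta$ are $\theta$-independent, the relation decouples mode by mode into $[V^{(n)},w]=i\,n\,\Theta\,V^{(n)}$. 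For $n\neq0$ I argue that the only meromorphic solution is $V^{(n)}\equiv0$: restricting to the invariant plane $\{r=0\}$ and reading off the lowest order in $r$, $V^{(n)}$ satisfies a transport equation along $\dot y=\tilde f_y(0,y)$ whose forcing ``frequency'' is $i\,n\,\Theta(0,y)$, and since $\Theta(0,y)\neq0$ any nonzero solution would acquire an essential singularity at $r=0$, contradicting meromorphy. Hence $\partial_\theta V_{ry}=0$, the surviving relation is $[V_{ry},w]=0$, and $(\tilde v_r,\tilde v_y)$ is a meromorphic commutative vector field of \eqref{eqn:Rg}.

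The main obstacle is exactly this $\theta$-independence in (ii) (and, on the first-integral side, the single-valuedness of $G$): both amount to reconciling the multivaluedness that the angle $\theta$ introduces with the single-valued meromorphy of the original data. I expect the nonvanishing hypotheses $\tilde f_{yj}(0,y)\neq0$ and $\Theta(0,y)\neq0$ to be precisely what make this work — the former supplies a good independent variable for the characteristics in (i), while the latter kills every nonzero Fourier mode in (ii). The routine chain-rule and Lie-bracket computations, and the verification that the constructed objects remain meromorphic off the exceptional sets (via holomorphic dependence of the characteristic ODE solutions on their data and parameters, together with the $2\pi$-periodicity of $\tilde F$ in $\theta$), I would carry out only at the end.
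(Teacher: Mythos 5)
The paper offers no proof of this proposition --- it is quoted from \cite{AY20}, to which the reader is referred --- so your attempt can only be measured against the construction that the statement itself encodes. Your part (i) is correct and coincides with that construction: writing the sought integral as $G(r,y)=\tilde F(r,\Phi(r,y),y)$, the chain rule together with $R\,\partial_r\tilde F+\tilde f_y\cdot\nabla_y\tilde F+\Theta\,\partial_\theta\tilde F=0$ shows that $G$ is a first integral of \eqref{eqn:Rg} exactly when $R\,\partial_r\Phi+\tilde f_y\cdot\nabla_y\Phi=\Theta$, and solving this transport equation by characteristics with $y_j$ as independent variable (legitimate since $\tilde f_{yj}(0,y)\neq 0$ for almost all $y$) produces precisely the characteristic system displayed in the statement, whose $\theta$-component is $\tilde\theta_j$. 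The technical points you defer (choice of initial data on a section, analytic dependence of solutions, $2\pi$-periodicity of $\tilde F$ in $\theta$) are the right ones to flag.

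Part (ii), however, contains a genuine gap, and it is not repairable within your framework. Your reduction to the mode equations $[V^{(n)},w]=in\,\Theta\,V^{(n)}$ is correct, but the crucial claim --- that for $n\neq 0$ and $\Theta(0,y)\neq 0$ the only meromorphic solution is $V^{(n)}\equiv 0$ --- is false. Take $m=1$, $f_x(x,y)=(-\omega x_2,\omega x_1)$, $f_y(x,y)=1$ with $\omega\neq 0$, so that $R\equiv 0$, $\tilde f_y\equiv 1$, $\Theta\equiv\omega$ and $w=(0,1)$. The mode equation for $n=1$ becomes $-\partial_y V=i\omega V$, which has the entire solutions $V(r,y)=e^{-i\omega y}c(r)$: nothing singular, let alone essentially singular, occurs at $r=0$. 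Your heuristic conflates the two variables: restricted to $\{r=0\}$ the mode equation is an ODE in $y$ along the flow of $\tilde f_y(0,y)$, its solutions are exponentials of the (analytic) time along that flow, and $r$ never enters the equation, so no singularity in $r$ can be generated. Moreover, such modes are realized by admissible data: $v=(\cos\omega y,\sin\omega y,0)$ is entire and satisfies $[v,f]=\D f\,v-\D v\,f=0$ for $f=(-\omega x_2,\omega x_1,1)$, yet its radial component $\tilde v_r=\cos(\theta-\omega y)$ genuinely depends on $\theta$, with Fourier modes $V^{(\pm 1)}=(\tfrac12 e^{\mp i\omega y},0)$. Since this example fulfills every hypothesis of part (ii) as stated, it shows that the asserted $\theta$-independence cannot be derived from the mode equations plus meromorphy of the modes at all --- indeed it appears to conflict with the statement as reproduced here, so the precise hypotheses and argument must be taken from \cite{AY20} rather than reconstructed along your lines. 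The only part of your computation that survives is the $n=0$ relation $[V^{(0)},w]=0$: the $\theta$-average of $(\tilde v_r,\tilde v_y)$ always commutes with $w$; but that average can vanish identically (it does in the example above), so it does not yield the proposition either.
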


See Proposition~2.1 of \cite{AY20} for the proof.
Using Proposition~\ref{prop:2a} for \eqref{eqn:pd1t} and \eqref{eqn:pd2t}
 (once for the former and twice for the latter),
 we immediately obtain the following propositions.

\begin{prop}
\label{prop:p1}
If the complexification of \eqref{eqn:sys} in case~{\rm(I)} is meromorphically integrable
 near $(x_1,x_2)=(0,0)$, then so is the system~\eqref{eqn:p10} near $r=0$.
\end{prop}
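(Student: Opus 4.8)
The plan is to derive Proposition~\ref{prop:p1} as a direct application of Proposition~\ref{prop:2a} to the system \eqref{eqn:pd1t} (equivalently, the complexification of \eqref{eqn:sys} in case~(I)), identifying $y$ with the single scalar $x_3$, so that $m=1$. First I would set up the correspondence: take $f_x=(f_1,f_2)$ and $f_y=g_3(x_1^2+x_2^2,x_3)$ in the notation of \eqref{eqn:fg}, and observe that under $(x_1,x_2)=(r\cos\theta,r\sin\theta)$ the system \eqref{eqn:pd1t} passes to \eqref{eqn:p1}, which is exactly of the form \eqref{eqn:Rg0} with $R(r,x_3)=g_1(r^2,x_3)r$, $\tilde f_y(r,x_3)=g_3(r^2,x_3)$ and $\Theta(r,x_3)=\omega+g_2(r^2,x_3)$. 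The target reduced system \eqref{eqn:Rg} is then precisely \eqref{eqn:p10}.

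Assuming meromorphic integrability of the complexified \eqref{eqn:sys} in the Bogoyavlenskij sense, since $n=3$ and the resonance degree is $\gamma_\r=2$, Definition~\ref{dfn:1a} forces either a $(1,2)$-structure (one vector field and two first integrals) or a $(2,1)$-structure (two commuting vector fields and one first integral). In either case I would feed the available objects into Proposition~\ref{prop:2a}: each meromorphic first integral $F$ produces, via part~(i), a meromorphic first integral $G(r,x_3)$ of \eqref{eqn:p10} near $r=0$; each meromorphic commutative vector field $v$ produces, via part~(ii), a $\theta$-independent meromorphic commutative vector field $(\tilde v_r,\tilde v_{x_3})$ of \eqref{eqn:p10}. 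To invoke part~(i) I must check that $\tilde f_y(0,x_3)=g_3(0,x_3)\not\equiv 0$, and for part~(ii) that $\Theta(0,x_3)=\omega+g_2(0,x_3)\not\equiv 0$; the latter holds since $g_2(0,0)=0$ gives $\Theta(0,0)=\omega>0$, and the former I would justify from the genericity hypotheses of Theorem~\ref{thm:main1} (where $\alpha_4\neq0$ enters through $g_3$), discarding the trivial case where it fails. Matching the counts, $(1,2)$-integrability yields two first integrals of the planar system \eqref{eqn:p10} and $(2,1)$-integrability yields one first integral together with one commutative vector field; either way \eqref{eqn:p10} is meromorphically integrable near $r=0$.

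The one genuine subtlety, rather than the routine bookkeeping, is verifying that the reduced objects retain the independence properties required by Definition~\ref{dfn:1a}: the two reduced first integrals must have linearly independent differentials almost everywhere, and a reduced vector field must remain nontrivial and independent of $f$ after the $\theta$-average implicit in part~(ii). I expect this to be the main point to handle with care, since Proposition~\ref{prop:2a} as quoted guarantees that each individual reduced object is a first integral or commutative vector field but does not, on its face, transport the joint linear-independence conditions; I would argue this by noting that functional dependence among the reduced objects on \eqref{eqn:p10} would, after reintroducing the $\theta$ variable through the explicit formulas, force a corresponding dependence upstream on \eqref{eqn:pd1t}, contradicting the assumed integrability. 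This last reduction step is exactly the content flagged in the introduction as the ``simple but clever trick,'' and the remainder is a mechanical verification that the hypotheses of Proposition~\ref{prop:2a} are met.
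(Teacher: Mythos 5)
Your overall route is exactly the paper's: Proposition~\ref{prop:p1} is obtained there as an immediate consequence of Proposition~\ref{prop:2a} applied with $m=1$, $y=x_3$, and your identification of $R$, $\tilde{f}_y$, $\Theta$ and of \eqref{eqn:p10} with \eqref{eqn:Rg}, as well as the check $\Theta(0,x_3)=\omega+g_2(0,x_3)\neq 0$, is correct; your appeal to $\alpha_4\neq 0$ to get $g_3(0,x_3)\not\equiv 0$ also matches how the proposition is actually used downstream (the paper is silent on this hypothesis). The first genuine problem is your case enumeration: Definition~\ref{dfn:1a} with $n=3$ also permits $(3,0)$-integrability (three commuting vector fields, no first integrals), and the resonance degree $\gamma_\r$ places no restriction on $m$ in the definition, so this case cannot be dismissed; it must be handled by applying Proposition~\ref{prop:2a}(ii) to both extra fields.

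The second, more serious problem is that the step you yourself single out as the real content is resolved incorrectly. The property you propose to verify in the $(1,2)$ case --- that the two reduced integrals $G_1,G_2$ have linearly independent differentials almost everywhere --- can never hold: both $\D G_1$ and $\D G_2$ annihilate the planar field $p\not\equiv 0$ of \eqref{eqn:p10}, hence are parallel wherever $p\neq 0$. Consequently your transport principle, ``dependence downstairs forces dependence upstairs,'' is false: independent $F_1,F_2$ always reduce to dependent $G_1,G_2$. It fails for vector fields too: the rotation field $(-x_2,x_1,0)$ commutes with \eqref{eqn:pd1}, is independent of $f$ almost everywhere, yet its reduction under Proposition~\ref{prop:2a}(ii) is identically zero. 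What is actually needed (and true) is weaker. For $(1,2)$ or $(2,1)$ upstairs one only needs a single reduced integral $G$ to be nonconstant, which yields $(1,1)$-integrability of \eqref{eqn:p10} with $p$ itself as the commuting field; nonconstancy follows, e.g., because $G\equiv\mathrm{const}$ would make $F$ constant on the surface $\theta=\tilde{\theta}(x_3)$, which is generically transverse to the flow, hence constant on an open set swept by the flow, hence constant, contradicting $\D F\neq 0$ almost everywhere. For $(3,0)$ upstairs one shows the reduced fields $\tilde{v}_2,\tilde{v}_3$ cannot both be parallel to $p$ almost everywhere: $\tilde{v}_i=\lambda_i p$ pointwise forces $v_i-\lambda_i f$ to be a multiple of $\partial_\theta$, making $f,v_2,v_3$ pointwise dependent upstairs; since each $\det(p,\tilde{v}_i)$ is meromorphic, at least one is nonzero almost everywhere, giving $(2,0)$-integrability. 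With these repairs your argument closes, and it then fills in precisely what the paper's one-line derivation from Proposition~\ref{prop:2a} leaves implicit.
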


\begin{prop}
\label{prop:p2}
If the complexification of \eqref{eqn:sys} in case~{\rm(II)} is meromorphically integrable
 near $(x_1,x_2)=(0,0)$ and near $(x_3,x_4)=(0,0)$,
 then so is the system~\eqref{eqn:p20} near $r_1=0$ and near $r_2=0$, respectively.
\end{prop}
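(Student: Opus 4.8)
The plan is to deduce the statement from Proposition~\ref{prop:2a} by applying it twice, peeling off the two angular variables $\theta_1,\theta_2$ one at a time, exactly as announced in the text. Throughout I work with the complexification of the Poincar\'e--Dulac normal form \eqref{eqn:pd2} (convergent, as in the main-theorem setting), whose polar form \eqref{eqn:p2} already exhibits the required block structure: the $(r_1,r_2)$-equations are independent of $\theta_1$ and $\theta_2$.

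First I would present \eqref{eqn:pd2} as a system of the type \eqref{eqn:fg} with $x=(x_1,x_2)$ the rotating pair and $y=(x_3,x_4)\in\Cset^2$, so $m=2$. The change $x=(r_1\cos\theta_1,r_1\sin\theta_1)$ brings it to the form \eqref{eqn:Rg0} with $R(r_1,y)=h_1(r_1^2,x_3^2+x_4^2)r_1$, $\Theta(r_1,y)=\omega_1+h_2(r_1^2,x_3^2+x_4^2)$, and $\tilde f_y$ the $(x_3,x_4)$-dynamics; the associated $(r_1,y)$-subsystem \eqref{eqn:Rg} is the three-dimensional system in $(r_1,x_3,x_4)$. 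Now I would check the two hypotheses of Proposition~\ref{prop:2a}. For part~(ii), $\Theta(0,y)=\omega_1+h_2(0,x_3^2+x_4^2)$ does not vanish for $y$ near the origin since $\omega_1>0$ and $h_2(0,0)=0$. For part~(i), $\tilde f_y(0,y)$ is the $(x_3,x_4)$-field on $\{r_1=0\}$, which carries the rotation of frequency $\omega_2>0$; hence at least one component $\tilde f_{yj}(0,y)$ is not identically zero, so it is nonzero for almost all $y$. Applying part~(i) to each first integral and part~(ii) to each commutative vector field of the integrable \eqref{eqn:pd2}, I obtain that the three-dimensional $(r_1,x_3,x_4)$-system is meromorphically integrable near $r_1=0$.

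Next I would repeat the construction for this three-dimensional system, now taking $(x_3,x_4)$ as the rotating pair and $y=r_1\in\Cset$, so $m=1$. The change $(x_3,x_4)=(r_2\cos\theta_2,r_2\sin\theta_2)$ produces $R=h_3(r_1^2,r_2^2)r_2$ and $\Theta=\omega_2+h_4(r_1^2,r_2^2)$, and the resulting $(r_1,r_2)$-subsystem is precisely \eqref{eqn:p20}. Again $\Theta(0,\cdot)=\omega_2+h_4(r_1^2,0)$ is nonzero near the origin since $\omega_2>0$, which feeds part~(ii), while $\tilde f_y(0,\cdot)=h_1(r_1^2,0)r_1$ is the datum for part~(i). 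Proposition~\ref{prop:2a} then yields that \eqref{eqn:p20} is meromorphically integrable near $r_1=0$. Interchanging the roles of the two rotating pairs---reducing $(x_3,x_4)$ first and $(x_1,x_2)$ second, and using the integrability near $(x_3,x_4)=(0,0)$---gives integrability of \eqref{eqn:p20} near $r_2=0$, which is the ``respectively'' half of the claim.

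The hard part will be the bookkeeping hidden behind ``apply Proposition~\ref{prop:2a}.'' Two points need care. First, the nonvanishing hypothesis of part~(i) at the second stage is a nondegeneracy condition on the radial equation, namely $h_1(\cdot,0)\not\equiv 0$ (respectively $h_3(0,\cdot)\not\equiv 0$); unlike the first stage, where the transverse rotation of frequency $\omega_2$ makes the condition automatic, here one must either read it off the structure of \eqref{eqn:pd2} or route the descent through a commutative vector field via part~(ii) instead. Second, I must track that the descended objects stay genuinely nontrivial and independent, so that the planar quotient \eqref{eqn:p20} is integrable in the Bogoyavlenskij sense (a single nonconstant first integral already suffices for a planar system); this amounts to verifying that the two successive reductions collapse neither every descended first integral to a constant nor every pair of descended vector fields to a proportional one, and that the meromorphic domains match up near the origin. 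These independence and domain issues are exactly what Proposition~\ref{prop:2a} packages, so once its hypotheses are confirmed at both stages the conclusion follows.
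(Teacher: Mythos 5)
Your route is exactly the paper's route: the paper's entire proof of this proposition is the remark that applying Proposition~\ref{prop:2a} twice to \eqref{eqn:pd2} ``immediately'' yields the claim, and your two-stage peeling, the choice of which pair to rotate out first in each half of the ``respectively,'' and your stage-one verifications are precisely the details the paper leaves unsaid. Those verifications are correct: at the first stage the part~(i) hypothesis holds because the $(x_3,x_4)$-field on $\{r_1=0\}$ has the nontrivial rotation $-\omega_2 x_4\partial_{x_3}+\omega_2 x_3\partial_{x_4}$ as its linear part, hence a component that is nonzero almost everywhere, and the part~(ii) hypothesis holds at both stages since $\Theta(0,\cdot)=\omega_j+h_{2j}(\cdot)$ is analytic and equals $\omega_j\neq 0$ at the origin.

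However, the first issue you flag and leave open is a genuine gap, not removable bookkeeping. The stage-two hypothesis $h_1(\cdot,0)\not\equiv 0$ (resp.\ $h_3(0,\cdot)\not\equiv 0$) cannot be ``read off the structure of \eqref{eqn:pd2}'': nothing in the normal form prevents $h_1(y_1,y_2)=y_2\hat{h}_1(y_1,y_2)$, i.e.\ $\alpha_1=0$ with all pure-$y_1$ coefficients vanishing, and this degeneracy is compatible with every hypothesis in force elsewhere in the paper (e.g.\ $\alpha_1=0\neq\alpha_3$ and $\alpha_2\alpha_3-\alpha_1\alpha_4\neq 0$). In that case Proposition~\ref{prop:2a}(i) is inapplicable at the second stage: on $\{r_2=0\}$ the $r_1$-equation degenerates to $\dot{r}_1=0$, so solutions there cannot be parametrized by $r_1$. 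Your fallback through part~(ii) does not repair this, because part~(ii) descends only commutative vector fields; if the four-dimensional system is $(1,3)$-integrable, the only field available is $f$ itself, whose descent is just the planar vector field of \eqref{eqn:p20}, while the three first integrals have no descent mechanism. Likewise, your closing claim that Proposition~\ref{prop:2a} ``packages'' the independence and nontriviality issues overstates it: that proposition asserts only that the descended objects are first integrals or commutative fields, not that they remain nonconstant or independent almost everywhere. To be clear, the gap is in the proof rather than (necessarily) in the statement---for instance with $h_1=y_2$, $h_3=y_1$ the reduced system $\dot{r}_1=r_1r_2^2$, $\dot{r}_2=r_1^2r_2$ happens to have the analytic integral $r_1^2-r_2^2$---and it is inherited: the paper's ``immediately'' silently assumes the same nonvanishing condition and the same independence bookkeeping. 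Your write-up is faithful to, and more candid than, the original, but as a self-contained proof it is incomplete in exactly the degenerate case where the paper's is, too.
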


We turn to systems of the general form \eqref{eqn:sys} with $n\ge 2$
 but $f(0)=0$, $\D f(0)=0,\ldots,\D^{k-1}f(0)=0$ and $\D^k f(0)\neq 0$ for some $k\in\Nset$.
Since $f(x)$ is analytic near $x=0$, we have
\begin{equation}
f(x)=\sum_{j=k}^\infty f_j(x),
\label{eqn:f}
\end{equation}
where the elements of $f_j(x)$ are $j$th-order homogeneous polynomials of $x$.
Letting $x=\epsilon y$ and changing the time variable as $t\to\epsilon^k t$,
 we rewrite \eqref{eqn:sys} as
\[
\dot{y}=\sum_{j=0}^\infty\epsilon^j f_{j+k}(y).
\]
We prove the following result.

\begin{thm}
\label{thm:2b}
Suppose that $f(x)$ has the form \eqref{eqn:f} for some $k\in\Nset$.
If the system \eqref{eqn:sys} is analytically integrable in the Bogoyavlenskij sense,
 then so is the truncated system
\begin{equation}
\dot{y}=f_k(y).
\label{eqn:yk}
\end{equation}
\end{thm}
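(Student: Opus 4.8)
The plan is to exploit the scaling $x=\epsilon y$, $t\mapsto\epsilon^k t$ already introduced before the statement, which turns \eqref{eqn:sys} into the analytic family $f^\epsilon(y)=\epsilon^{-k}f(\epsilon y)=\sum_{j\ge0}\epsilon^j f_{j+k}(y)$, jointly analytic in $(\epsilon,y)$ on a neighborhood of $\{0\}\times U$ for a suitable region $U\subset\Cset^n$, and to note that $f^0=f_k$ is exactly the truncated field \eqref{eqn:yk}. The idea is to transport the whole integrability structure of $f$ through this family and read off its limit at $\epsilon=0$. First I would record the data supplied by Definition~\ref{dfn:1a}: commuting fields $f_1=f,f_2,\dots,f_m$ and first integrals $F_1,\dots,F_{n-m}$, all analytic near $x=0$. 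Expanding each into homogeneous parts, $f_i=\sum_{j\ge k_i}f_{i,j}$ and $F_l=\sum_{j\ge d_l}F_{l,j}$, where $k_i$ and $d_l$ are the lowest orders with $f_{i,k_i}\neq0$ and $F_{l,d_l}\neq0$ (so $k_1=k$ and $f_{1,k}=f_k$), I set $f_i^\epsilon(y)=\epsilon^{-k_i}f_i(\epsilon y)$ and $F_l^\epsilon(y)=\epsilon^{-d_l}F_l(\epsilon y)$. These are analytic in $(\epsilon,y)$ near $\epsilon=0$, with limits $f_i^0=f_{i,k_i}$ and $F_l^0=F_{l,d_l}$ equal to the \emph{leading forms}.

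Next I would check that the defining relations survive the limit. Using $\D f_j^\epsilon(y)=\epsilon^{1-k_j}\D f_j(\epsilon y)$ one finds $[f_i^\epsilon,f_j^\epsilon](y)=\epsilon^{1-k_i-k_j}[f_i,f_j](\epsilon y)\equiv0$ for $\epsilon\neq0$, and likewise $\D F_l^\epsilon(y)^\T f_i^\epsilon(y)=\epsilon^{1-d_l-k_i}\bigl(\D F_l^\T f_i\bigr)(\epsilon y)\equiv0$. Since the left-hand sides are analytic in $(\epsilon,y)$ and vanish for all $\epsilon\neq0$, they vanish at $\epsilon=0$ as well. Hence the leading forms $f_k,f_{2,k_2},\dots,f_{m,k_m}$ pairwise commute and $F_{1,d_1},\dots,F_{n-m,d_{n-m}}$ are their first integrals, which furnishes the entire algebraic part of integrability for \eqref{eqn:yk}.

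The main obstacle is the \emph{independence} clause, which can degenerate in the limit: the fields and differentials are independent for $\epsilon\neq0$ but their leading forms may become dependent at $\epsilon=0$. Concretely, for an $m$-element index set $I$ the corresponding $m\times m$ minor $\Delta_I^\epsilon$ of the $n\times m$ matrix $[\,f_1^\epsilon\ \cdots\ f_m^\epsilon\,]$ equals $\epsilon^{-K}\Delta_I(\epsilon y)$ with $K=\sum_i k_i$, so its value at $\epsilon=0$ is the degree-$K$ homogeneous part of $\Delta_I$; this is nonzero for some $I$ precisely when $\mathrm{ord}\,(f_1\wedge\cdots\wedge f_m)=\sum_i\mathrm{ord}\,f_i$, that is, when the leading forms are linearly independent almost everywhere. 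To force this I would first replace $f_2,\dots,f_m$ by constant linear combinations of $f_1,\dots,f_m$ (keeping $f_1=f$ fixed), and the $F_l$ by constant linear combinations among themselves: such combinations preserve the commutation relations and the annihilation relations $\D F_l^\T f_i\equiv0$, and, being effected by invertible constant matrices, preserve independence almost everywhere, while allowing one to raise the orders of individual members and thus lower $\sum_i\mathrm{ord}\,f_i$ and $\sum_l\mathrm{ord}\,F_l$.

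Choosing these combinations so that $\sum_i\mathrm{ord}\,f_i$ attains its minimum over all admissible bases makes the leading forms independent almost everywhere. The crux is to prove that this minimum equals $\mathrm{ord}\,(f_1\wedge\cdots\wedge f_m)$, and in particular to rule out the residual situation in which constant reductions leave a \emph{nonconstant} leading-order relation; here I expect to use the commutativity $[f_i,f_j]\equiv0$ (respectively the relations $\D F_l^\T f_i\equiv0$) to show that such a relation is incompatible with independence of the full fields, since taking the leading homogeneous part of the bracket (respectively of the annihilation identity) constrains how the leading forms can be proportional. Running the same argument for the gradients $\D F_l$ then yields $n-m$ leading forms with linearly independent differentials almost everywhere, completing the verification that \eqref{eqn:yk} is analytically integrable in the Bogoyavlenskij sense.
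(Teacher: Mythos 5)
Your first two steps are sound and coincide with the paper's own argument in different clothing: the paper expands $F$ and $v$ in homogeneous components and reads off the lowest-order coefficient of $\D F(\epsilon y)^\T f(\epsilon y)\equiv 0$ and of $[v,f](\epsilon y)\equiv 0$, which is exactly what your limit $\epsilon\to 0$ of the rescaled family produces. The genuine gap is the final step, and you have named it yourself: the claim that, after constant linear recombination, the leading forms become linearly independent almost everywhere is left as something you ``expect'' to deduce from the commutation and annihilation identities. That deduction is not supplied, and in fact it cannot be, because the claim is false within the constant-coefficient toolkit. Take $n=3$ and $f(x_1,x_2,x_3)=(0,0,x_3)$, so $k=1$ and $f_k=f$, with analytic first integrals $F=x_1$ and $G=x_1^2/2+x_2^3$. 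All conditions of Definition~\ref{dfn:1a} hold ($(1,2)$-integrability; the independence minor is $3x_2^2\not\equiv 0$), and all of your limit identities hold, yet the leading forms $x_1$ and $x_1^2/2$ have everywhere-parallel gradients, and \emph{every} constant linear combination $c_1F+c_2G$ has leading form $c_1x_1$ or $c_2x_1^2/2$, again with gradient parallel to $(1,0,0)$. So no invertible constant recombination of $(F,G)$ yields leading differentials that are independent almost everywhere; the repair requires the nonlinear combination $G-F^2/2=x_2^3$. (The conclusion of Theorem~\ref{thm:2b} is trivially true in this example, since $f$ is already homogeneous; the example tests the method, not the statement. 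A smaller point: your bookkeeping is inverted --- raising the order of a member raises $\sum_i\mathrm{ord}\,f_i$, and since the order of the wedge is invariant under invertible constant recombination and bounds the sum from above, you would need to \emph{maximize} the sum, not minimize it; in the example the maximum is still strictly below the order of the wedge.)

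What this step actually needs is a Ziglin-type lemma: one must allow polynomial (more generally, analytic) functions of the first integrals, not merely linear combinations, and on the vector-field side combinations $\sum_j P_j(F_1,\dots,F_{n-m})\,f_j$ whose coefficients are functions of the first integrals --- these still commute pairwise precisely because each $F_l$ is a first integral of every $f_j$. To be fair, the paper's own proof is vulnerable at the same spot: it also repairs independence only by constant linear combinations, treating only the degeneracy in which a constant combination of the leading parts vanishes identically, and it does not address a pointwise (nonconstant-factor) dependence of the leading forms such as the one above. So your proposal mirrors the paper's route up to and including its weakest point; the difference is that the paper asserts the recombination suffices, while you candidly leave it open. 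As written, your argument does not prove Theorem~\ref{thm:2b}: the deferred ``crux'' is exactly where the real work lies, and it needs a strictly larger class of recombinations than the one you (and the paper) allow.
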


\begin{proof}
Let $F(x)$ be an analytic first integral of \eqref{eqn:sys} near $x=0$,
 and let
\begin{equation}
F(x)=\sum_{j=\ell}^\infty F_j(x)
\label{eqn:thm2b}
\end{equation}
for some $\ell\in\Nset$,
 where $F_j(x)$ is a $j$th-order homogeneous polynomial of $x$.
Here we have assumed that $F(0)\equiv 0$ without loss of generality.
Then we have
\[
\D F(\epsilon y)^\T f(\epsilon y)
 =\sum_{j=0}^\infty\sum_{l=0}^\infty\epsilon^{k+\ell+j+l}\D F_{\ell+j}(y)^\T f_{k+l}(y)\equiv 0,
\]
in particular,
\[
\D F_\ell(y)^\T f_k(y)\equiv 0,
\]
which means that $F_\ell(y)$ is an analytic first integral of the truncated system \eqref{eqn:yk}.

On the other hand,
 let $v(x)$ be an analytic commutative vector field of \eqref{eqn:sys} near $x=0$.
Let
\[
v(\epsilon y)=\sum_{j=0}^\infty \epsilon^j v_j(y),
\]
where the elements of $v_j(x)$ are $j$th-order homogeneous polynomials of $x$,
 and assume that $v_j(y)\equiv 0$, $j=0,\ldots,\ell-1$, and $v_\ell(y)\not\equiv 0$
 for some $\ell\in\Nset$.
Then we have
\begin{align*}
[v,f](\epsilon y)=&\D f(\epsilon y)v(\epsilon y)-\D v(\epsilon y)f(\epsilon y)\\
=&\sum_{j=0}^\infty\sum_{l=0}^\infty\epsilon^{k+\ell+j+l}
 \D f_{k+l}(y)v_{\ell+j}(y)-\D v_{\ell+j}(y)f_{k+l}(y)\equiv 0,
\end{align*}
in particular,
\[
\D f_k(y)v_\ell(y)-\D v_\ell(y)f_k(y)\equiv 0,
\]
which means that $v_\ell(y)$ is an analytic commutative vector field
 of the truncated system \eqref{eqn:yk}.
 
Suppose that the system \eqref{eqn:sys} is analytically integrable.
Then we can choose the analytic first integrals (resp. commutative vector fields)
 such that their leading terms are linearly independent almost everywhere in a neighborhood of $x=0$,
 by taking their linear combinations if necessary.
Actually, for instance,
 if $F(x)$ and $G(x)$ are linearly independent first integrals with \eqref{eqn:thm2b} and
\[
G(x)=\sum_{j=\ell}^\infty G_j(x),
\]
where $G_j(x)$ is a $j$th-order homogeneous polynomials of $x$,
 and for some $m>\ell$,
\[
\sum_{j=\ell}^{m-1}(c_1\D F_j(x)+c_2\D G_j(x))=0
\]
for some $(c_1,c_2)\neq(0,0)$ but
\[
\sum_{j=\ell}^{m}(\tilde{c}_1\D F_j(x)+\tilde{c}_2\D G_j(x))\neq 0
\]
for any $(\tilde{c}_1,\tilde{c}_2)\neq(0,0)$, then one may take $F(x)$ and
\[
\tilde{G}(x)=c_1 F(x)+c_2 G(x),
\]
for which the leading term is
\[
c_1\D F_m(x)+c_2\D G_m(x),
\]
as two new linearly independent first integrals.
So we show that the leading terms of first integrals and commutative vector fields
 satisfy conditions (i) and (ii) of Definition~\ref{dfn:1a} for \eqref{eqn:yk},
 along with the above observations.
Thus, we obtain the desired result.
\end{proof}

\begin{rmk}\
\begin{enumerate}
\setlength{\leftskip}{-1.8em}
\item[\rm(i)]
In contrast to Theorem~$\ref{thm:2b}$, the truncated system
\[
\dot{y}=\sum_{j=k}^{k+m}f_j(x)
\]
with $m\ge 1$ may not be analytically integrable in general
 even if the full system \eqref{eqn:f} is analytically integrable.
Actually, Yoshida {\rm\cite{Y88}} showed that
 the truncation of the three-particle Toda lattice {\rm\cite{T81}} with $k=1$ 
 is analytically nonintegrable at any order $m\ge 1$
 although the Toda lattice is analytically integrable as well known {\rm\cite{F74,H74}}.
\item[\rm(ii)]
An argument similar to that in the above proof was used
 for a three-degree-of-freedom Hamiltonian system in Section~$2$ of {\rm\cite{S18}}.
\end{enumerate}
\end{rmk}

Applying Theorem~\ref{thm:2b} to \eqref{eqn:p10} and \eqref{eqn:p20}
 and using Propositions~\ref{prop:p1} and \ref{prop:p2},
 we obtain the following.

\begin{prop}
\label{prop:p1t}
If the complexification of \eqref{eqn:sys} in case {\rm(I)} is analytically integrable
 near the origin $x=0$, then so is the truncated system
\begin{equation}
\dot r=\alpha_1 rx_3,\quad
\dot x_3=\alpha_3r^2+\alpha_4x_3^2
\label{eqn:p1t}
\end{equation}
near $(r,x_3)=(0,0)$.
\end{prop}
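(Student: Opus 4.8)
The plan is to realize \eqref{eqn:p1t} through a two-step descent from the full system in case~(I). First I would descend from the (complexified) Poincar\'e--Dulac normal form \eqref{eqn:pd1}, written in polar coordinates as \eqref{eqn:p1}, to its $\theta$-independent planar subsystem \eqref{eqn:p10}; this is exactly Proposition~\ref{prop:p1}, i.e.\ the specialization of Proposition~\ref{prop:2a} to \eqref{eqn:fg} with $m=1$ and $y=x_3$. Then I would descend from \eqref{eqn:p10} to its lowest-order truncation \eqref{eqn:p1t} by Theorem~\ref{thm:2b}. Since we are assuming analytic integrability of \eqref{eqn:sys}, Theorem~\ref{thm:zung} lets me take \eqref{eqn:pd1}, and hence \eqref{eqn:p10}, to be convergent, so that \eqref{eqn:p10} is a genuine analytic planar system to which Theorem~\ref{thm:2b} can be applied.

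The second ingredient is the degree bookkeeping that identifies \eqref{eqn:p1t} as the leading part of \eqref{eqn:p10}. Using $g_1(0,0)=0$, $g_3(0,0)=0$ and $\D_{y_2}g_3(0,0)=0$ together with the comparison of \eqref{eqn:pd1} and its truncation \eqref{eqn:pd1t}, one finds $g_1(r^2,x_3)\,r=\alpha_1 r x_3+O(|(r,x_3)|^3)$ and $g_3(r^2,x_3)=\alpha_3 r^2+\alpha_4 x_3^2+O(|(r,x_3)|^3)$. Hence both components of \eqref{eqn:p10} vanish to first order and have homogeneous leading term of degree two, and that leading term $f_2$ is precisely the right-hand side of \eqref{eqn:p1t}. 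Thus Theorem~\ref{thm:2b} with $k=2$ yields analytic integrability of \eqref{eqn:p1t}, provided \eqref{eqn:p10} is already known to be analytically integrable.

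The step I expect to be the main obstacle is bridging the categories: Proposition~\ref{prop:p1} is stated for meromorphic integrability, while Theorem~\ref{thm:2b} needs \eqref{eqn:p10} to be \emph{analytically} integrable, so I must check that the reduction preserves analyticity. For commutative vector fields this is harmless, since the field constructed in Proposition~\ref{prop:2a}(ii) is independent of $\theta$ and, evaluated at $\theta=0$, is an analytic commutative vector field of \eqref{eqn:p10}. The first integrals are the delicate case: the construction $G=\tilde F(r,\tilde\theta_j(y_j),y)$ uses $\tilde\theta_j$ obtained by integrating $\d\theta/\d x_3=\Theta/g_3$ across the zero of $g_3$ at the origin, which is generally not analytic. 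Instead I would exploit that \eqref{eqn:pd1} is a Poincar\'e--Dulac normal form, so the rotation field $\partial_\theta$ generated by the semisimple part $Sx$ commutes with $f$: averaging an analytic first integral $\tilde F$ over the $S^1$-action in $\theta$ gives a $\theta$-independent analytic function, and this average is again a first integral of \eqref{eqn:p10}, because $R$, $g_3$ and $\Theta$ in \eqref{eqn:p1} are all independent of $\theta$ and the averaged $\partial_\theta$-term vanishes by $2\pi$-periodicity. The point requiring real care is that this averaging must not drop the number of functionally independent first integrals (or commuting fields) below what integrability of \eqref{eqn:p10} demands; I would secure this by working from the outset with an integrating structure invariant under the rotational $S^1$-symmetry of the normal form, whose existence is the content of the torus-action refinement of Zung's theory. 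Combining these $\theta$-independent first integrals with the reduced commutative vector fields shows \eqref{eqn:p10} is analytically integrable, which completes the descent.
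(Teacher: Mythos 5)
Your core route is exactly the paper's: the paper proves Proposition~\ref{prop:p1t} in a single sentence, by chaining Proposition~\ref{prop:p1} (Proposition~\ref{prop:2a} with $m=1$, $y=x_3$) with Theorem~\ref{thm:2b} applied to \eqref{eqn:p10} with $k=2$, and your degree bookkeeping identifying \eqref{eqn:p1t} as the quadratic leading part of \eqref{eqn:p10} is the same as the paper's implicit one. Where you differ is in your third paragraph, and this is a genuine refinement rather than a detour: read literally, Proposition~\ref{prop:p1} only delivers \emph{meromorphic} integrability of \eqref{eqn:p10}, whereas Theorem~\ref{thm:2b} (whose proof Taylor-expands the first integrals and commuting fields) requires \emph{analytic} integrability as input, a mismatch the paper passes over in silence. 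The distinction is not pedantic here: under the hypotheses of Theorem~\ref{thm:main1} with $\alpha_4/\alpha_1\in\Qset$, the truncated system \eqref{eqn:p1t} \emph{does} admit meromorphic first integrals (a suitable power of $Q$ in the proof of Proposition~\ref{prop:4a} is rational), so a merely meromorphic conclusion at the planar level would not feed the contradiction. Your patch is sound: commuting fields reduce analytically by evaluating the $\theta$-independent expression of Proposition~\ref{prop:2a}(ii) at $\theta=0$, and averaging an analytic first integral of \eqref{eqn:pd1} over $\theta$ produces an analytic first integral of \eqref{eqn:p10} because the $(r,x_3)$-components of \eqref{eqn:p1} are $\theta$-independent and the $\partial_\theta$-term averages to zero. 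The only soft spot is your appeal to a ``torus-action refinement'' of Zung's theory to preserve independence; this can be replaced by an elementary argument: writing $F=\sum_k F_k(r,x_3)e^{ik\theta}$, each mode with $k\neq 0$ satisfies $g_1r\,\partial_r F_k+g_3\partial_{x_3}F_k=-ik(\omega+g_2)F_k$, and restricting to $r=0$, where $g_3(0,x_3)$ vanishes at $x_3=0$ while $\omega+g_2\to\omega\neq 0$, forces $F_k\equiv 0$ order by order in $r$; hence every analytic first integral of \eqref{eqn:pd1} is automatically $\theta$-independent and nothing is lost in the reduction. In short, your proof is the paper's proof made honest on the analytic-versus-meromorphic bookkeeping, which is a point worth making explicit.
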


\begin{prop}
\label{prop:p2t}
If the complexification of \eqref{eqn:sys} in case {\rm(II)} is analytically integrable
 near the origin $x=0$, then  so is the truncated system
\begin{equation}
\dot{r}_1
=(\alpha_1 r_1^2+\alpha_2r_2^2)r_1,\quad
\dot{r}_2
= (\alpha_3 r_1^2+\alpha_4r_2^2)r_2
\label{eqn:p2t}
\end{equation}
near $(r_1,r_2)=(0,0)$.
\end{prop}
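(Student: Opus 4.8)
The plan is to follow the same two-step scheme already used for Proposition~\ref{prop:p1t}, with the single rotation of case~(I) replaced by the two independent rotations of case~(II). First I would assume that the complexification of \eqref{eqn:sys} in case~(II) is analytically, hence meromorphically, integrable near the origin, so that in particular it is integrable near $(x_1,x_2)=(0,0)$ and near $(x_3,x_4)=(0,0)$. Applying Proposition~\ref{prop:2a} successively to the two rotation symmetries $(x_1,x_2)=(r_1\cos\theta_1,r_1\sin\theta_1)$ and $(x_3,x_4)=(r_2\cos\theta_2,r_2\sin\theta_2)$---which is exactly the content of Proposition~\ref{prop:p2}---reduces the problem to the integrability of the planar radial system \eqref{eqn:p20} near $r_1=0$ and near $r_2=0$. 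The transversality hypotheses of Proposition~\ref{prop:2a} are met here because the two angular velocities do not vanish at the origin, $\Theta(0,\cdot)=\omega_1\neq 0$ for the first reduction and $\omega_2\neq 0$ for the second, while at $r_1=0$ (resp.\ $r_2=0$) the remaining components of the reduced vector field, whose leading part is the rotation $-\omega_2 x_4,\ \omega_2 x_3$ (resp.\ $-\omega_1 x_2,\ \omega_1 x_1$), are nonzero for almost all values of the transverse variable.

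Next I would extract the leading part of \eqref{eqn:p20}. Since $h_1(0,0)=h_3(0,0)=0$, the expansions $h_1(r_1^2,r_2^2)=\alpha_1 r_1^2+\alpha_2 r_2^2+O(|r|^4)$ and $h_3(r_1^2,r_2^2)=\alpha_3 r_1^2+\alpha_4 r_2^2+O(|r|^4)$ show that the right-hand side of \eqref{eqn:p20} is a sum of homogeneous terms of odd degree starting at degree three, whose lowest-order part is precisely \eqref{eqn:p2t}. Applying Theorem~\ref{thm:2b} with $k=3$ to \eqref{eqn:p20} then yields the analytic integrability of the truncated system \eqref{eqn:p2t} near $(r_1,r_2)=(0,0)$, as claimed. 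The only formal difference from the case~(I) argument is that the leading degree is $3$ rather than $2$, because the nonlinearity in the PD normal form \eqref{eqn:pd2} begins at cubic order.

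The step that needs genuine care---and the main obstacle---is keeping the whole reduction inside the \emph{analytic} category, since Theorem~\ref{thm:2b} requires \eqref{eqn:p20} to be analytically, not merely meromorphically, integrable. For the commutative vector fields this is automatic: the descended field in Proposition~\ref{prop:2a}(ii) is $\theta$-independent and is assembled from the original analytic components through $v_1\cos\theta+v_2\sin\theta$ and $v_y$, so it remains analytic. The delicate object is the first integral, because the construction in Proposition~\ref{prop:2a}(i) passes through the solution $\tilde\theta_j$ of an auxiliary ODE and is a priori only meromorphic. I would remove this difficulty by exploiting the exact $T^2$ rotational symmetry that \eqref{eqn:pd2} possesses, which is available analytically once Zung's Theorem~\ref{thm:zung} is invoked to make the PD normal form convergent and analytically integrable: averaging an analytic first integral over the torus, i.e.\ taking its zero Fourier mode in $(\theta_1,\theta_2)$, produces an analytic $T^2$-invariant first integral, hence an analytic function of $r_1^2$ and $r_2^2$ alone, which descends analytically to \eqref{eqn:p20}. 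The residual point, which I expect to be the subtlest, is to check that these torus-averaged data retain linear and functional independence almost everywhere, so that a full integrability structure of \eqref{eqn:p20} is recovered; once that is secured, Theorem~\ref{thm:2b} applies verbatim.
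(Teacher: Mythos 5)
Your skeleton is exactly the paper's proof: apply Proposition~\ref{prop:2a} twice (i.e., Proposition~\ref{prop:p2}) to descend to the radial system \eqref{eqn:p20}, then apply Theorem~\ref{thm:2b} with $k=3$ to pass to the cubic truncation \eqref{eqn:p2t}; the paper states this combination in a single sentence. Your verification of the hypotheses (nonvanishing angular velocities, leading degree $3$) is correct, and, more importantly, you put your finger on a genuine tension that the paper's one-line justification passes over: Proposition~\ref{prop:p2} as stated delivers only \emph{meromorphic} integrability of \eqref{eqn:p20}, whereas Theorem~\ref{thm:2b} consumes \emph{analytic} integrability. You are also right about where the mismatch sits: the vector-field half of Proposition~\ref{prop:2a} descends analytic data analytically, while the first-integral construction $G=\tilde F(r,\tilde\theta_j(y_j),y)$ does not, so some replacement (such as your torus-averaging of integrals of the Zung-convergent normal form \eqref{eqn:pd2}) is indeed needed to stay in the analytic category.

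However, your repair is incomplete precisely at the point you flag, and that point is not a routine check---it is the entire remaining difficulty. Plain averaging can destroy the data: nothing you have set up prevents the zero Fourier mode $F_0$ of a nonconstant analytic first integral $F$ from being constant, nor independent integrals from averaging to dependent ones; likewise, the analytic descents of independent commuting vector fields can become dependent (the rotational fields $\partial_{\theta_1},\partial_{\theta_2}$ descend to $0$), which matters when the integrability of \eqref{eqn:pd2} is realized by vector fields alone, e.g.\ $(4,0)$-integrability. So ``once that is secured, Theorem~\ref{thm:2b} applies verbatim'' concedes the proposition rather than proves it. A way to close the gap: write $F=\sum_{k\in\Zset^2}F_k(r_1,r_2)e^{i(k_1\theta_1+k_2\theta_2)}$; since the reduced field $R$ of \eqref{eqn:p20} and the angular speeds $\Omega_1=\omega_1+h_2$, $\Omega_2=\omega_2+h_4$ are $\theta$-independent with real Taylor coefficients, each mode satisfies $\D F_k\cdot R=-i(k_1\Omega_1+k_2\Omega_2)F_k$, whence $F_kF_k^\ast$ (with $F_k^\ast$ the conjugate-coefficient series) is an \emph{analytic} first integral of \eqref{eqn:p20}; for $k\neq0$ it vanishes on an axis because $F_k=r_1^{|k_1|}r_2^{|k_2|}\times(\text{analytic in }r_1^2,r_2^2)$, so it is nonconstant whenever $F_k\not\equiv0$. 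Thus either $F_0$ or some $F_kF_k^\ast$ is a nonconstant analytic first integral of \eqref{eqn:p20} whenever $F$ is nonconstant; combining this with a nonvanishing-minor (rank) argument showing that at least one descended commuting field stays independent of the reduced field almost everywhere handles all cases $(m,4-m)$, and only then does Theorem~\ref{thm:2b} finish the proof. Without such supplements the proposal does not yet establish Proposition~\ref{prop:p2t}.
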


\begin{rmk}
\label{rmk:2a}
If the system~\eqref{eqn:sys} is real-analytically integrable near $x=0$,
 then its complexification is also analytically integrable near $x=0$.
So we only have to prove that
 the complexifications of \eqref{eqn:p1t} and \eqref{eqn:p2t}
 are analytically nonintegrable near $x=0$
 for the proofs of Theorems~$\ref{thm:main1}$ and $\ref{thm:main2}$.
\end{rmk}


\section{
Planar Vector Fields}

In this section we give a useful relation between first integrals and commutative vector fields
 for proving the analytic nonintegrability
 of such planar systems as \eqref{eqn:p1t} and \eqref{eqn:p2t}.

Consider planar vector fields of the form
\begin{equation}
\dot{z}=p(z),\quad
z\in\Cset^2,
\label{eqn:psys}
\end{equation}
where $p(z)$ is analytic in $z$.
We prove the following.

\begin{prop}
\label{prop:3a}
Let $D\subset\Cset$ be a region
 that is covered by nonconstant solutions to \eqref{eqn:psys} almost everywhere.
Suppose that the system~\eqref{eqn:psys} has a first integral $Q(x)$
 and commutative vector field $q(x)$ in $D$.
Let
\[
\Delta(x)=\det(p(x),q(x))
= p_1(x)q_2(x)-p_2(x)q_1(x),
\]
where $q_j(x)$ and $p_j(x)$ are the $j$th-elements of $q(x)$ and $p(x)$, respectively.
Then there exists a function $\chi:\Cset\to\Cset$ such that
\begin{equation}
\Delta(x)\D Q(x)=\chi(Q(x))
\begin{pmatrix}
q_2(x)\\
-q_1(x)
\end{pmatrix}.
\label{eqn:prop3a}
\end{equation}
\end{prop}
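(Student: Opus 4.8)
The plan is to convert the two structural hypotheses into a pair of linear equations for the gradient $\D Q$ and then solve that $2\times 2$ system, the coefficient determinant being exactly $\Delta$. First I would record the first-integral relation: since $Q$ is a first integral of \eqref{eqn:psys}, one has $\D Q(x)^\T p(x)\equiv 0$ on $D$, which is the first scalar constraint on the two components of $\D Q(x)$.

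The second constraint comes from commutativity. Set $\phi(x)=\D Q(x)^\T q(x)$, the derivative of $Q$ along $q$. A direct computation using $[p,q]\equiv 0$ together with $\D Q^\T p\equiv 0$ shows $\D\phi^\T p\equiv 0$; that is, $\phi$ is again a first integral of \eqref{eqn:psys}. Now the hypothesis that $D$ is covered almost everywhere by nonconstant solutions is used decisively: the generic orbits are one-dimensional and contained in level sets of $Q$, so any first integral of \eqref{eqn:psys} on $D$ agrees, off an exceptional set, with a function of $Q$. Applying this to $\phi$ produces a one-variable function $\chi$ with $\D Q(x)^\T q(x)=\chi(Q(x))$, giving the second scalar constraint.

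It then remains to solve. Reading $\D Q(x)^\T p(x)=0$ and $\D Q(x)^\T q(x)=\chi(Q(x))$ as a linear system for $\D Q(x)$, the coefficient matrix has rows $p(x)^\T$ and $q(x)^\T$ and determinant $p_1q_2-p_2q_1=\Delta(x)$. Solving this system by Cramer's rule and clearing the determinant yields the asserted identity \eqref{eqn:prop3a}.

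I expect the descent step to be the main obstacle: rigorously passing from ``$\phi$ is a first integral of \eqref{eqn:psys}'' to ``$\phi$ is a function of $Q$'' requires the covering hypothesis and careful control of the exceptional sets where $\D Q=0$, where $\Delta=0$, or where solutions are constant, as well as verification that the resulting $\chi$ is single-valued and has the required regularity; in the meromorphic setting one must also track the poles of $Q$ and $q$ so that \eqref{eqn:prop3a} holds as an identity of meromorphic functions.
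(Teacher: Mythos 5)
Your strategy is in substance the paper's own argument, just packaged more directly: your computation that $\phi(x)=\D Q(x)^\T q(x)$ is again a first integral (using $\D^2Q\,p=-\D p^\T\D Q$ and $\D q\,p=\D p\,q$) is exactly the content of Lemmas~\ref{lem:3a} and \ref{lem:3b}, where $q(\varphi(t))$ solves the VE, $\D Q(\varphi(t))$ solves the AVE, and constancy of $\Phi(t)^\T\Psi(t)$ gives constancy of $q^\T\D Q$ along each nonconstant orbit; your descent from ``$\phi$ is a first integral'' to ``$\phi=\chi(Q)$'' via the covering hypothesis is the same step the paper takes when it declares the orbit constant to depend only on the value of $Q(x)$, and you are right to flag it as the delicate point, which the paper also treats briskly.

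The genuine flaw is your final sentence: Cramer's rule does \emph{not} yield \eqref{eqn:prop3a} as printed. Solving your linear system gives
\begin{equation*}
\begin{pmatrix} p(x)^\T \\ q(x)^\T \end{pmatrix}\D Q(x)
=\begin{pmatrix} 0 \\ \chi(Q(x)) \end{pmatrix}
\qquad\Longrightarrow\qquad
\Delta(x)\,\D Q(x)=\chi(Q(x))
\begin{pmatrix} -p_2(x) \\ p_1(x) \end{pmatrix},
\end{equation*}
a multiple of the vector perpendicular to $p$, not of $(q_2,-q_1)^\T$. Indeed \eqref{eqn:prop3a} as printed is untenable except degenerately: writing $q_2=\Delta\,\D_{x_1}Q/\chi(Q)$ and $q_1=-\Delta\,\D_{x_2}Q/\chi(Q)$ and substituting into $\Delta=p_1q_2-p_2q_1$ forces $\Delta=(\Delta/\chi(Q))\,p^\T\D Q\equiv 0$; concretely, $p=(x_1,x_2)^\T$, $q=(x_1,-x_2)^\T$, $Q=x_1/x_2$ satisfy all hypotheses and give $\Delta\,\D Q=2Q\,(-p_2,p_1)^\T$, which is not of the stated form. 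The source of the mismatch is a typo in the paper itself: in \eqref{eqn:thm3a2} the two right-hand entries are swapped (since $Q$ is a first integral of $p$, the $p$-row yields $0$ and the $q$-row yields the orbit constant $C(Q)$, not vice versa, and the subsequent ``Note that'' line should read $\D Q(x)^\T p(x)=0$), and this propagates into the statement of Proposition~\ref{prop:3a}; the version actually used in Section~4 is the corrected $p$-form, through the ratios $Qp_2/\D_rQ=-Qp_1/\D_{x_3}Q$ that determine $\Delta$. So your plan, carried out honestly, proves the correct identity and would have exposed the discrepancy; asserting without computing that the inversion ``yields the asserted identity'' is the one real gap in an otherwise sound proposal.
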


\begin{proof}
Let $z=\varphi(t)$ be a nonconstant particular solution to \eqref{eqn:psys}.
We begin with the following lemmas.

\begin{lem}
\label{lem:3a}
If the planar system \eqref{eqn:psys} has a commutative vector field $q(z)$
 $($resp. a first integral $Q(z))$,
 then $\xi=q(\varphi(t))$ is a solution to the variational equation $($\!VE$)$
 of \eqref{eqn:psys} along $\varphi(t)$,
\begin{equation}
\dot{\xi}=\D p(\varphi(t))\xi
\label{eqn:ve}
\end{equation}
$($resp. then $\eta=\D Q(\varphi(t))$ is a solution to the adjoint variational equation $($\!AVE$)$
 of \eqref{eqn:psys} along $\varphi(t)$,
\begin{equation}
\dot{\eta}=-\D p(\varphi(t))^\T\eta\ ).
\label{eqn:ave}
\end{equation}
\end{lem}

\begin{proof}
Let $q(z)$ be a commutative vector field of \eqref{eqn:psys}.
Then
\[
\D q(z)p(z)-\D p(z)q(z)=0,
\]
so that
\[
\frac{\d}{\d t}q(\varphi(t))=\D p(\varphi(t))q(\varphi(t)).
\]
Hence, $\xi=q(\varphi(t))$ is a solution to \eqref{eqn:ve}.

On the other hand, let $Q(z)$ be a first integral of \eqref{eqn:psys}.
Then
\[
p(z)^\T\D Q(z)=0,
\]
so that
\[
\D(p(z)^\T\D Q(z))=\D p(z)^\T\D Q(z)+\D^2 Q(z)p(z)=0.
\]
Hence,
\begin{align*}
&
\frac{\d}{\d t}\D Q(\varphi(t))
=\D^2Q(\varphi(t))p(\varphi(t))
=-\D p(\varphi(t))^\T\D Q(\varphi(t)),
\end{align*}
which means that $\eta=\D Q(\varphi(t))$ is a solution to \eqref{eqn:ave}.
\end{proof}

\begin{lem}
\label{lem:3b}
Let $\Phi(t)$ and $\Psi(t)$ be fundamental matrices
 to the VE \eqref{eqn:ve} and AVE \eqref{eqn:ave}, respectively.
Then
\[
\Phi(t)^\T\Psi(t)=\text{const.}
\]
\end{lem}

\begin{proof}
We easily compute
\begin{align*}
\frac{\d}{\d t}(\Phi(t)^\T\Psi(t))
=&\dot{\Phi}(t)^\T\Psi(t)+\Phi(t)^\T\dot{\Psi}(t)\\
=&\Phi(t)^\T\D p(\varphi(t))^\T\Psi(t)-\Phi(t)^\T\D p(\varphi(t))^\T\Psi(t)=0,
\end{align*}
which yields the desired result.
\end{proof}

We return to the proof of Proposition~\ref{prop:3a}.
By Lemma~\ref{lem:3a} $\xi=q(\varphi(t))$ and $\eta=\D Q(\varphi(t))$
 are solutions to the VE \eqref{eqn:ve} and AVE \eqref{eqn:ave}, respectively.
Let $\tilde{\eta}(t)$ be another linearly independent solution to \eqref{eqn:ave}.
Noting that $\xi=p(\varphi(t))=\dot{\varphi}(t)$
 is another linearly independent solution to \eqref{eqn:ve},
 we see by Lemma~\ref{lem:3b} that 
\begin{equation}
\begin{pmatrix}
p(\varphi(t))^\T\\
q(\varphi(t))^\T
\end{pmatrix}
(\D Q(\varphi(t))\ \tilde{\eta}(t))=\text{const.},
\label{eqn:thm3a1}
\end{equation}
so that
\begin{equation}
\begin{pmatrix}
p(x)^\T\\
q(x)^\T
\end{pmatrix}
\D Q(x)=
\begin{pmatrix}
C(Q(x)) \\
0
\end{pmatrix}
\label{eqn:thm3a2}
\end{equation}
holds almost everywhere in $D$,
 where $C(Q(x))\neq 0$ is a constant only depending on the value of $Q(x)$,
 since Eq.~\eqref{eqn:thm3a1} holds at any point $x=\varphi(t)$ on the nonconstant solution
 for the same constant matrix in its right hand side.
Note that $\D Q(x)^\T q(x)=q(x)^\T\D Q(x)=0$ since $Q(x)$ is a first integral of \eqref{eqn:psys}.
The matrix
\[
\begin{pmatrix}
p(x)^\T\\
q(x)^\T
\end{pmatrix}
\]
is nonsigular and its inverse matrix is given by
\[
\frac{1}{\Delta(x)}
\begin{pmatrix}
q_2(x) & -p_2(x)\\
-q_1(x) & p_1(x)
\end{pmatrix}.
\]
From \eqref{eqn:thm3a2} we obtain
\[
\D Q(x)=\frac{1}{\Delta(x)}
\begin{pmatrix}
C(Q(x))q_2(x)\\
-C(Q(x))q_1(x)
\end{pmatrix},
\]
which yields \eqref{eqn:prop3a} with $\chi(Q)=C(Q)$.
\end{proof}


\section{Proofs of the Main Theorems}

We are now in a position to prove Theorems~\ref{thm:main1} and \ref{thm:main2}.

\subsection{Proof of Theorem~\ref{thm:main1}}

By Proposition~\ref{prop:p1t} and Remark~\ref{rmk:2a},
 Theorem~\ref{thm:main1} immediately follows from the following proposition.

\begin{prop}
\label{prop:4a}
If $\alpha_1\neq 0$ and one of the following conditions holds,
 then the truncated system \eqref{eqn:p1t} is analytically nonintegrable near $(r,x_3)=(0,0):$
\begin{enumerate}
\setlength{\leftskip}{-1.8em}
\item[\rm(i)]
$\alpha_1\alpha_4>0\,;$
\item[\rm(ii)]
$\alpha_1\alpha_4<0$ and $\alpha_4/\alpha_1\not\in\Qset$.
\end{enumerate}
\end{prop}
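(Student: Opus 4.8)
The plan is to reduce everything, via Theorem~\ref{thm:2b}, to the homogeneous quadratic field and then to combine an explicit integration with the relation of Proposition~\ref{prop:3a}. By Remark~\ref{rmk:2a} it suffices to treat the complexification of \eqref{eqn:p1t}, with $z=(r,x_3)$ and $p(z)=(\alpha_1 rx_3,\,\alpha_3 r^2+\alpha_4 x_3^2)$. Since $n=2$, Bogoyavlenskij integrability can only be $(1,1)$ or $(2,0)$, so I must rule out both a nonconstant analytic first integral and an analytic commutative vector field $q$ with $\Delta:=\det(p,q)\not\equiv0$. Note that $\alpha_1\ne0$ is indispensable: if $\alpha_1=0$ then $r$ is an analytic first integral.

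First I would record an explicit integration. Because $p$ is homogeneous of degree two, a direct check shows that for $\alpha_1\ne\alpha_4$ the function
\[
\tilde H=r^{-2\alpha_4}W^{\alpha_1},\qquad W:=\alpha_3 r^2+(\alpha_4-\alpha_1)x_3^2,
\]
is a first integral, with integrating factor
\[
\mu_0=2(\alpha_4-\alpha_1)\,r^{-2\alpha_4-1}W^{\alpha_1-1},\qquad \D\tilde H=\mu_0\begin{pmatrix}-p_2\\ p_1\end{pmatrix}.
\]
The borderline $\alpha_1=\alpha_4$, which lies in case (i), I would dispose of separately, where the integration yields a logarithm or the meromorphic integral $x_3/r$. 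To exclude $(1,1)$-integrability I pass to leading order: by the argument of Theorem~\ref{thm:2b} an analytic first integral would have a homogeneous polynomial first integral as its lowest-order part, and since on a planar system every first integral is a function of $\tilde H$, this part must be a constant multiple of $r^{e_r}W^{e_W}$ with $e_r=-\alpha_4\ell/(\alpha_1-\alpha_4)$ and $e_W=\alpha_1\ell/(2(\alpha_1-\alpha_4))$. One computes $\operatorname{sign}(e_r e_W)=-\operatorname{sign}(\alpha_1\alpha_4)$, so under (i) exactly one exponent is negative and, since $W$ is coprime to $r$, no polynomial results; under (ii) the exponents share a sign but $e_r/e_W=-2\alpha_4/\alpha_1\notin\Qset$, so they cannot both be nonnegative integers. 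Hence there is no analytic first integral.

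The crux is excluding $(2,0)$-integrability. Passing to leading order again, a commutative analytic $q$ has a homogeneous polynomial commutative field as its leading part, so I may assume $q$ homogeneous. Applying Proposition~\ref{prop:3a} with the first integral $\tilde H$ forces $\Delta\mu_0$ to be a function of $\tilde H$ alone; by homogeneity this pins $\Delta$ down to a constant multiple of $r^{2\alpha_4 m+1}W^{1-\alpha_1 m}$ for some constant $m$, and polynomiality of $\Delta$ requires both $2\alpha_4 m+1$ and $1-\alpha_1 m$ to be nonnegative integers. Under (ii) the irrationality of $\alpha_4/\alpha_1$ forces $m=0$, so $\Delta$ is a multiple of $rW$, $q$ is linear, and solving $[p,q]\equiv0$ directly gives $q\equiv0$ (the degenerate value $\alpha_3=0$ handled by hand), contradicting $\Delta\not\equiv0$. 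Under (i) the two integrality conditions leave only finitely many admissible degrees for $q$, and I would finish by eliminating each through the commutation equations $[p,q]\equiv0$.

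I expect this final elimination to be the main obstacle, and it is genuinely delicate: at resonant ratios $\alpha_4/\alpha_1\in\Qset$ the constraint on $\Delta$ admits nonzero solutions $m$, so low-degree commutative fields can in principle appear and must be excluded with care, and the same vigilance is needed at the degenerate coefficient values $\alpha_1=\alpha_4$ and $\alpha_3=0$. This is precisely where the real work of case (i) concentrates. Once the exclusion of analytic first integrals is combined with that of independent analytic commutative vector fields, the assumed analytic integrability is contradicted and the proposition follows.
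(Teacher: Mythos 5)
Your strategy coincides with the paper's: the same reduction to the complexified homogeneous planar field $p(r,x_3)=(\alpha_1 rx_3,\ \alpha_3r^2+\alpha_4x_3^2)$, the same non-analytic first integral (your $\tilde H$ is the paper's $Q=r^{-2\alpha_4/\alpha_1}W$, $W:=\alpha_3r^2+(\alpha_4-\alpha_1)x_3^2$, raised to the power $\alpha_1$), the same use of Proposition~\ref{prop:3a} to pin down $\Delta=\det(p,q)$, and the same commutation-equation endgame; your case (ii) is essentially complete. The genuine gap is case (i), and you flag it yourself: at resonant ratios, e.g.\ $2\alpha_4/\alpha_1\in\Nset$, the integrality constraints on $(2\alpha_4m+1,\,1-\alpha_1m)$ do \emph{not} force $m=0$; taking $m=1/\alpha_1$ leaves the candidate $\Delta=c\,r^{1+2\alpha_4/\alpha_1}$ (and similarly $\Delta=c\,W^{1+\alpha_1/(2\alpha_4)}$ when $\alpha_1/(2\alpha_4)\in\Nset$), i.e.\ candidate homogeneous commuting fields whose degree grows with $\alpha_4/\alpha_1$, and ``eliminating each through the commutation equations'' is a family of linear systems of unbounded size for which you give no argument. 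That elimination \emph{is} the content of case (i). The paper instead disposes of both cases at once: it argues that $\Delta/(rW)=\chi(Q)/Q$, being a meromorphic function of the non-analytic first integral $Q$, must be a nonzero constant, so $\Delta=CrW$ in all cases; solving $\det(p,q)=CrW$ then forces the linear part of $q$ to be a nonzero multiple of the Euler field $(r,x_3)$, whence the quadratic part of $[q,p]=\D p\,q-\D q\,p$ equals $Cp\neq0$. Whether or not that step is airtight precisely at your resonant values (where $Q$ is meromorphic, so ``not analytic'' needs care), it is the decisive step, and your proposal replaces it by an acknowledged to-do.

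Second, the degenerate values you promise to ``handle by hand'' cannot all be handled, because the statement fails at one of them: if $\alpha_3=0$, then $q=(r,0)$ satisfies $\D p\,q=\D q\,p=(\alpha_1rx_3,0)^\T$, hence $[q,p]=0$, and $\det(p,q)=-\alpha_4rx_3^2\not\equiv0$, so the truncated system \eqref{eqn:p1t} is $(2,0)$-integrable with polynomial data under both (i) and (ii). In particular your assertion that under (ii) the linear commutation equations force $q\equiv0$ is exactly what breaks there: with $\alpha_3=0$ those equations become underdetermined and admit this nonzero solution, so no case analysis can rescue the claim, and any correct proof must assume $\alpha_3\neq0$. (The paper's own unique-solvability claim for the coefficients of $q$ silently uses $\alpha_3\neq0$ as well, so this is a defect it shares, but it is fatal to your promised repair.) The other degenerate value $\alpha_1=\alpha_4$, which condition (i) allows, annihilates $W$ and $\tilde H$; your sketch notes only that a logarithmic integral appears, and Proposition~\ref{prop:3a} would have to be rerun with that integral, which you do not do.
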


\begin{proof}
Assume that $\alpha_1\neq 0$ and condition~(i) or (ii) holds.
We easily see that the system~\eqref{eqn:p1t} has no constant solution except for $(r,x_3)=(0,0)$
 and a first integral
\[
Q(r,x_3)=r^{-2\alpha_4/\alpha_1}(\alpha_3r^2+(\alpha_4-\alpha_1)x_3^2),
\]
for which
\begin{align*}
&
\D_r Q(r,x_3)=\frac{2(\alpha_1-\alpha_4)}{\alpha_1}r^{-2\alpha_4/\alpha_1-1}
 (\alpha_3r^2+\alpha_4x_3^2),\\
&
\D_{x_3}Q(r,x_3)=-2(\alpha_1-\alpha_4)r^{-2\alpha_4/\alpha_1}x_3.
\end{align*}
Obviously, $Q(r,x_3)$ is not analytic.
Moreover, when $\alpha_1\neq 0$,
 if there exists an analytic first integral,
 then $\alpha_1\alpha_4\le 0$ and $\alpha_4/\alpha_1\in\Qset$.
Hence, no analytic first integral exists.

Assume that the system~\eqref{eqn:p1t} has a commutative vector field $q(r,x_3)$.
Let $p(r,x_3)$ denote the vector field of \eqref{eqn:p1t}.
We compute
\[
\frac{Q(r,x_3)p_2(r,x_3)}{\D_r Q(r,x_3)}
=-\frac{Q(r,x_3)p_1(r,x_3)}{\D_{x_3}Q(r,x_3)}
=\frac{\alpha_1r (\alpha_3r^2+(\alpha_4-\alpha_1)x_3^2)}{2(\alpha_1-\alpha_4)},
\]
so that by Proposition~\ref{prop:3a}
\begin{equation}
\Delta(r,x_3)=Cr (\alpha_3r^2+(\alpha_4-\alpha_1)x_3^2),
\label{eqn:4a}
\end{equation}
where $C\neq 0$ is some constant, since $Q(r,x_3)$ is not analytic.
We write the Taylor expansion of $q_j(r,x_3)$ around the origin as
\[
q_j(r,x_3)=\sum_{k,l=1}^{\infty}q_{jkl}r^kx_3^l,
\]
where $q_{jkl}\in\Cset$, $k,l=1,2$,  are constants, for $j=1,2$.
Substituting them into \eqref{eqn:4a} and solving the resulting equation about $p_{jkl}$, we obtain
\[
q(r,x_3)=C
\begin{pmatrix}
r\\
x_3
\end{pmatrix}+O(|r|^2+|x_3|^2).
\]
So we have
\[
\D p(r,x_3)q(r,x_3)-\D q(r,x_3)p(x)=C
\begin{pmatrix}
\alpha_1rx_3\\
\alpha_3r^2+\alpha_4x_3^2
\end{pmatrix}+O(|r|^3+|x_3|^3),
\]
which means that $q(r,x_3)$ is not a commutative vector field.
Thus, we obtain the desired result.
\end{proof}

\subsection{Proof of Theorem~\ref{thm:main2}}

As in Section~4.1, by Proposition~\ref{prop:p2t} and Remark~\ref{rmk:2a},
 Theorem~\ref{thm:main2} immediately follows from the following proposition.

\begin{prop}
\label{prop:4b}
If $\alpha_1\neq\alpha_3$ and one of the following conditions holds,
 then the truncated system \eqref{eqn:p2t} is analytically nonintegrable near $(r_1,r_2)=(0,0):$
\begin{enumerate}
\setlength{\leftskip}{-1.6em}
\item[\rm(i)]
$\alpha_2\alpha_3-\alpha_1\alpha_4\neq 0\,;$
\item[\rm(ii)]
$\alpha_2\alpha_4>0\,;$
\item[\rm(iii)]
$\alpha_2\alpha_4<0$ and $\alpha_2/\alpha_4\not\in\Qset$.
\end{enumerate}
\end{prop}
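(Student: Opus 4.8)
The plan is to follow the proof of Proposition~\ref{prop:4a} almost verbatim, replacing the single invariant curve there by the three invariant curves of the homogeneous cubic system~\eqref{eqn:p2t}. Writing $K_1=\alpha_1 r_1^2+\alpha_2 r_2^2$ and $K_2=\alpha_3 r_1^2+\alpha_4 r_2^2$, one has $\dot r_1=K_1 r_1$, $\dot r_2=K_2 r_2$ and, for $M:=(\alpha_3-\alpha_1)r_1^2+(\alpha_4-\alpha_2)r_2^2$, $\dot M=2(\alpha_1 r_1^2+\alpha_4 r_2^2)M$, so that $r_1=0$, $r_2=0$ and $M=0$ are invariant. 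Choosing the exponents
\[
(\lambda_1,\lambda_2,\lambda_3)=\bigl(2\alpha_4(\alpha_3-\alpha_1),\,2\alpha_1(\alpha_2-\alpha_4),\,\alpha_1\alpha_4-\alpha_2\alpha_3\bigr),
\]
for which $\lambda_1 K_1+\lambda_2 K_2+2\lambda_3(\alpha_1 r_1^2+\alpha_4 r_2^2)\equiv0$, yields the Darboux first integral $Q=r_1^{\lambda_1}r_2^{\lambda_2}M^{\lambda_3}$. First I would show that \eqref{eqn:p2t} has no analytic first integral. Restricting $Q$ to a neighbourhood of the invariant line $r_1=0$ one finds $Q\sim r_1^{\lambda_1}r_2^{\lambda_2+2\lambda_3}$ with $\lambda_1:(\lambda_2+2\lambda_3)=-\alpha_4:\alpha_2$; thus under (ii) these exponents have opposite signs (a pole) and under (iii) their ratio is irrational (a branch point), so $Q$, and hence any first integral, is non-analytic, exactly as the factor $r^{-2\alpha_4/\alpha_1}$ behaves in Proposition~\ref{prop:4a}. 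Under (i) the exponent $\lambda_3=\alpha_1\alpha_4-\alpha_2\alpha_3$ of $M$ is nonzero while $\alpha_1\neq\alpha_3$ prevents $\lambda_1,\lambda_2,\lambda_3$ from being simultaneously non-negative integers after rescaling, with the same conclusion.

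Next, assume \eqref{eqn:p2t} has an analytic commutative vector field $q$, and let $p$ denote its vector field. Using the factorizations $\lambda_1 M+2\lambda_3(\alpha_3-\alpha_1)r_1^2=2(\alpha_3-\alpha_1)(\alpha_4-\alpha_2)K_2$ and $\lambda_2 M+2\lambda_3(\alpha_4-\alpha_2)r_2^2=2(\alpha_3-\alpha_1)(\alpha_4-\alpha_2)K_1$, a direct computation gives
\[
\frac{Q p_2}{\D_{r_1}Q}=-\frac{Q p_1}{\D_{r_2}Q}=\frac{r_1 r_2 M}{2(\alpha_3-\alpha_1)(\alpha_4-\alpha_2)},
\]
so that by Proposition~\ref{prop:3a}, and because $Q$ is non-analytic, $\Delta:=\det(p,q)=C\,r_1 r_2 M$ for some constant $C\neq0$. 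Since $p$ is homogeneous of degree three and $\Delta$ of degree four, $q$ has a nonzero linear leading part $q^{(1)}$; matching the degree-four part of $p_1 q_2-p_2 q_1=C r_1 r_2 M$ forces the off-diagonal coefficients of $q^{(1)}$ to vanish and, using $\alpha_2\alpha_3-\alpha_1\alpha_4\neq0$ in case (i), pins down $q^{(1)}=-C(r_1,r_2)$. As the Euler field $E=(r_1,r_2)$ satisfies $[E,p]=2p$ for cubic $p$, the lowest-order term of $[q,p]=\D p\,q-\D q\,p$ equals $-2C\,p\not\equiv0$, contradicting $[q,p]\equiv0$. By Proposition~\ref{prop:p2t} and Remark~\ref{rmk:2a} this proves the claim.

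The step I expect to be the main obstacle is the passage to $\Delta=C\,r_1 r_2 M$ with a genuinely \emph{constant} $C$. Proposition~\ref{prop:3a} only yields $\Delta=(\chi(Q)/Q)\cdot(\text{const})\cdot r_1 r_2 M$ with $\chi(Q)/Q$ an a priori arbitrary function of $Q$; a nonconstant $\chi(Q)/Q$ would, after dividing by the polynomial $r_1 r_2 M$, produce a nonconstant rational first integral of \eqref{eqn:p2t}. Ruling this out is precisely where the arithmetic of the exponents $\lambda_1,\lambda_2,\lambda_3$—their failure to be rationally proportional with compatible signs—must be invoked, and it is this point that each of the conditions (i)--(iii) is meant to secure. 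I would also have to dispose of the degenerate subcases $\alpha_2=\alpha_4$ (where $M$ collapses onto $r_1=0$ and $Q$ must be replaced by one carrying an exponential factor) and those in which some $\alpha_j$ vanishes (where the factorizations above and the reduction of $q^{(1)}$ to the radial field require a separate but entirely analogous bookkeeping, with the needed nonvanishing supplied by $\alpha_1\neq\alpha_3$ together with the active condition).
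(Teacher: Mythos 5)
Your proposal follows the paper's own proof essentially step for step: the same Darboux first integral $r_1^{\lambda_1}r_2^{\lambda_2}M^{\lambda_3}$ built from the invariant curves $r_1=0$, $r_2=0$, $M=0$ (the paper writes its reciprocal in a grouped form), the same application of Proposition~\ref{prop:3a} to force $\Delta=\det(p,q)=C\,r_1r_2M$ with $C$ constant, the same Taylor matching that pins the leading part of $q$ to the radial field, and the same contradiction from the leading term of $[q,p]$ being a nonzero multiple of $p$. The two obstacles you single out---proving that $\chi(Q)/Q$ is genuinely constant and handling the degenerate subcase $\alpha_2=\alpha_4$ (where $Q$ collapses to a constant)---are treated no more fully in the paper, which simply asserts that $C\neq 0$ is a constant ``since $Q(r_1,r_2)$ is not analytic'' and never discusses $\alpha_2=\alpha_4$, so your write-up is, if anything, more candid about the same gaps.
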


\begin{proof}
Assume that $\alpha_1\neq\alpha_3$ and condition~(i), (ii) or (iii) holds.
We easily see that the system~\eqref{eqn:p2t} has no constant solution except for $(r,x_3)=(0,0)$
 and a first integral
\[
Q(r_1,r_2)
=\left(\frac{r_1^{\alpha_4}}{r_2^{\alpha_2}}\right)^{2(\alpha_1-\alpha_3)}
 \left(\frac{(\alpha_1-\alpha_3)r_1^2}{r_2^2}
 +\alpha_2-\alpha_4\right)^{\alpha_2\alpha_3-\alpha_1\alpha_4},
\]
for which
\begin{align*}
\D_{r_1}Q(r_1,r_2)
=&\frac{2(\alpha_1-\alpha_3)(\alpha_2-\alpha_4)(\alpha_3r_1^2+\alpha_4r_2^2)}
 {r_1((\alpha_1-\alpha_3)r_1^2+(\alpha_2-\alpha_4)r_2^2)}Q(r_1,r_2),\\
\D_{r_2}Q(r_1,r_2)
=&-\frac{2(\alpha_1-\alpha_3)(\alpha_2-\alpha_4)(\alpha_1r_1^2+\alpha_2r_2^2)}
 {r_2((\alpha_1-\alpha_3)r_1^2+(\alpha_2-\alpha_4)r_2^2)}Q(r_1,r_2).
\end{align*} 
Obviously, $Q(r_1,r_2)$ is not analytic.
Moreover, if there exists an analytic first integral,
 then $\alpha_2\alpha_3-\alpha_1\alpha_4=0$ and one of the following conditions holds:
\begin{itemize}
\setlength{\leftskip}{-2.5em}
\item
$\alpha_2\alpha_4=0$;
\item
$\alpha_2\alpha_4<0$ and $\alpha_2/\alpha_4\in\Qset$.
\end{itemize}
Hence, no analytic first integral exists under our assumption.

Assume that the system~\eqref{eqn:p2t} has a commutative vector field $q(r_1,r_2)$.
Let $p(r_1,r_2)$ denote the vector field of \eqref{eqn:p2t}.
We have
\[
\frac{Q(r_1,r_2)p_2(r_1,r_2)}{\D_{r_1} Q(r_1,r_2)}
=-\frac{Q(r_1,r_2)p_1(r_1,r_2)}{\D_{r_2}Q(r_1,r_2)}
=\tfrac{1}{2}r_1r_2\left(\frac{r_1^2}{\alpha_2-\alpha_4}+\frac{r_1^2}{\alpha_1-\alpha_3}\right),
\]
so that by Proposition~\ref{prop:3a}
\begin{equation}
\Delta(r_1,r_2)=Cr_1r_2\left(\frac{r_1^2}{\alpha_2-\alpha_4}+\frac{r_2^2}{\alpha_1-\alpha_3}\right),
\label{eqn:4b}
\end{equation}
where $C\neq 0$ is some constant, since $Q(r_1,r_2)$ is not analytic.
We write the Taylor expansion of $q_j(r_1,r_2)$ around the origin as
\[
q_j(r_1,r_2)=\sum_{k,l=1}^{\infty}q_{jkl}r_1^kr_2^l,
\]
where $q_{jkl}\in\Cset$, $k,l=1,2$, are constants, for $j=1,2$.
Substituting them into \eqref{eqn:4a} and solving the resulting equation about $q_{jkl}$,
 we obtain
\[
q(r_1,r_2)=-\frac{C}{(\alpha_1-\alpha_3)(\alpha_2-\alpha_4)}
\begin{pmatrix}
r_1\\
r_2
\end{pmatrix}+O(|r|_1^2+|r_2|^2).
\]
So we have
\begin{align*}
&
\D p(r_1,r_2)q(r_1,r_2)-\D q(r_1,r_2)p(x)\\
&
=-\frac{2C}{(\alpha_1-\alpha_3)(\alpha_2-\alpha_4)}
\begin{pmatrix}
(\alpha_1r_1^2+\alpha_2r_2^2)r_1\\
(\alpha_3r_1^2+\alpha_4r_2^2)r_2
\end{pmatrix}+O\left((|r|^4+|x_3|^4\right),
\end{align*}
which means that $q(r_1,r_2)$ is not a commutative vector field.
Thus, we obtain the desired result.
\end{proof}


\section{Examples}

As stated in Section~1,
Theorems~\ref{thm:main1} and \ref{thm:main2} imply that three- or four-dimensional systems
 exhibiting fold-Hopf and double-Hopf bifurcations are analytically nonintegrable
 under the weak conditions.
In this section we give two such examples.
 
\subsection{R\"ossler system}

We first consider the three-dimensional system
\begin{equation}
\dot{x}_1=-(x_2+x_3),\quad
\dot{x}_2=x_1+ax_2,\quad
\dot{x}_3=bx_1+x_3(x_1-c),
\label{eqn:rsys}
\end{equation}
where $a,b,c$ are constants.
The R\"ossler system
\begin{equation}
\dot{x}_1=-(x_2+x_3),\quad
\dot{x}_2=x_1+a_0x_2,\quad
\dot{x}_3=b_0+x_3(x_1-c_0),
\label{eqn:rsys0}
\end{equation}
which was originally proposed by R\"ossler \cite{R76}
 and has been extensively studied,
e.g., in \cite{BBS09,GZ21,GZ22,LDM95,LV07,LZ02,R16,WZ09,Z97,Z04},
 is transformed to \eqref{eqn:rsys} with
\[
a=a_0,\quad
b=\frac{c_0\pm\sqrt{c_0^2-4a_0b_0}}{2a_0},\quad
c=\tfrac{1}{2}\left(c_0\pm\sqrt{c_0^2-4a_0b_0}\right),
\]
by a change of coordinates
\[
x\mapsto x-\left(x_{10},-\frac{x_{10}}{a_0},\frac{x_{10}}{a_0}\right),\quad
x_{10}=\tfrac{1}{2}\left(c_0\mp\sqrt{c_0^2-4a_0b_0}\right)
\]
if $c_0^2>4a_0b_0$ and $a_0\neq 0$, where the upper or lower sign is taken simultaneously,
 and with
\[
a=0,\quad
b=\frac{b_0}{c_0},\quad
c=c_0,
\]
by a change of coordinates
\[
x\mapsto x-\left(0,-\frac{b_0}{c_0},\frac{b_0}{c_0}\right)
\]
if $a_0=0$ and $c_0\neq 0$. 
The system \eqref{eqn:rsys} has also been referred to as the R\"ossler system in some references.
Periodic orbits, invariant tori, chaos  and fold-Hopf bifurcations in \eqref{eqn:rsys}
 were studied in \cite{CNV20,L14,MBKPS20,ZY20}.
Moreover, the $(1,2)$- or $(2,1)$-integrability of \eqref{eqn:rsys} and/or \eqref{eqn:rsys0}
 was discussed in \cite{LV07,LZ02,Z04} and the following results
 were obtained:
\begin{itemize}
\setlength{\leftskip}{-1.8em}
\item[\rm(i)]
The systems~\eqref{eqn:rsys} and \eqref{eqn:rsys0}
 are analytically $(1,2)$-integrable when $a,b,c=0$ and $a_0,b_0,c_0=0$, respectively;
\item[\rm(ii)]
The system~\eqref{eqn:rsys0} is neither analytically $(1,2)$- nor $(2,1)$-integrable near $x=0$
 when $a_0\neq 0$.
\end{itemize}
The second statement above means that
 the system~\eqref{eqn:rsys} is  neither analytically $(1,2)$- nor $(2,1)$-integrable
 near any point on the line
\[
\{(x_1,x_2,x_3)\in\Rset^3\mid x_2=-ax_1,x_3=ax_1\},
\]
especially near the origin when $a\neq 0$ and $c=2ab\neq 0$.
However, we cannot deny from the statement
 that the systems~\eqref{eqn:rsys} and \eqref{eqn:rsys0}
 may be analytically $(3,0)$-integrable near the origin even when $a_0\neq 0$.

When $b=1$ and $c=a\in(-\sqrt{2},\sqrt{2})$,
 the system \eqref{eqn:rsys} satisfies condition~(I)
 with $\omega=\sqrt{2-a^2}$.
We compute the coefficients in \eqref{eqn:pd1t} as
\begin{equation}
\alpha_1=-\frac{a^3}{2\omega^2},\quad
\alpha_2=\frac{a^2+1}{2\omega},\quad
\alpha_3=\frac{2a}{\omega^2},\quad
\alpha_4=\frac{a}{\omega^2}.
\label{eqn:5a}
\end{equation}
See Appendix~A.1 for the derivation of \eqref{eqn:5a}.
Applying Theorem~\ref{thm:main1}, we obtain the following.

\begin{prop}
\label{prop:5a}
When $b=1$, $c=a\in(-\sqrt{2},\sqrt{2})$ and $a^2\not\in\Qset$,
 the R\"ossler system \eqref{eqn:rsys} is not real-analytically integrable near the origin.
\end{prop}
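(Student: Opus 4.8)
The plan is to apply Theorem~\ref{thm:main1} directly, since under the stated hypotheses the R\"ossler system \eqref{eqn:rsys} falls squarely into case~(I) and the normal-form coefficients have already been recorded in \eqref{eqn:5a}. First I would observe that the conditions $b=1$ and $c=a\in(-\sqrt{2},\sqrt{2})$ are precisely those guaranteeing that $\D f(0)$ has eigenvalues $\lambda=0,\pm i\omega$ with $\omega=\sqrt{2-a^2}>0$, so that \eqref{eqn:rsys} is transformed to \eqref{eqn:pd1} having the $O(|x|^2)$-truncation \eqref{eqn:pd1t} with the $\alpha_j$ given by \eqref{eqn:5a}. Thus it only remains to verify the hypotheses $\alpha_1\neq 0$ and condition~(i) or~(ii) of Theorem~\ref{thm:main1}.

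The key computations proceed as follows. Since $a^2\not\in\Qset$ forces $a\neq 0$, the coefficient $\alpha_1=-a^3/(2\omega^2)$ is nonzero, so the first hypothesis is met. Next I would form the product
\[
\alpha_1\alpha_4=\left(-\frac{a^3}{2\omega^2}\right)\frac{a}{\omega^2}=-\frac{a^4}{2\omega^4}<0,
\]
which shows that condition~(i) can never hold here and that we must instead check condition~(ii). Forming the relevant ratio gives
\[
\frac{\alpha_4}{\alpha_1}=\frac{a/\omega^2}{-a^3/(2\omega^2)}=-\frac{2}{a^2},
\]
and I would then argue that $-2/a^2\not\in\Qset$ whenever $a^2\not\in\Qset$: if $-2/a^2$ were rational, then $a^2$ would be rational as well, contradicting the hypothesis. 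Hence both parts of condition~(ii) of Theorem~\ref{thm:main1} are satisfied.

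The proof is therefore essentially a direct verification rather than a construction, and I expect no serious analytic obstacle once \eqref{eqn:5a} is in hand; the only genuine computational content lies in deriving the normal-form coefficients \eqref{eqn:5a} themselves, which is deferred to Appendix~A.1. Given those values, applying Theorem~\ref{thm:main1}(ii) yields the real-analytic nonintegrability of \eqref{eqn:rsys} near the origin, completing the argument. If I wanted to stress-test the plan, the one point warranting care is the rationality step: one should confirm that multiplying an irrational by the nonzero rational $-2$ and inverting preserves irrationality, which is exactly the elementary contrapositive used above.
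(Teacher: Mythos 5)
Your proposal is correct and matches the paper's argument exactly: the paper likewise obtains Proposition~\ref{prop:5a} by applying Theorem~\ref{thm:main1} with the coefficients \eqref{eqn:5a}, where $\alpha_1\alpha_4=-a^4/(2\omega^4)<0$ rules out condition~(i) and $\alpha_4/\alpha_1=-2/a^2\not\in\Qset$ (equivalent to $a^2\not\in\Qset$) secures condition~(ii). Your explicit verification of these two computations simply fills in steps the paper leaves implicit in the phrase ``Applying Theorem~\ref{thm:main1}, we obtain the following.''
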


\subsection{Coupled van der Pol oscillators}
We turn to the second example,
 the coupled van der Pol oscillators
\begin{align*}
&
\ddot{u}_1-(\delta_1-a_1u_1^2)\dot{u}_1+u_1=b_1u_2,\\
&
\ddot{u}_2-(\delta_2-a_2u_2^2)\dot{u}_2+cu_2=b_2u_1,
\end{align*}
or as a first-order system  
\begin{equation}
\begin{split}
&
\dot{x}_1=x_2,\quad
\dot{x}_2=-x_1+(\delta_1-a_1x_1^2)x_2+b_1x_3,\\
&
\dot{x}_3=x_4,\quad
\dot{x}_4=-cx_3+(\delta_2-a_2x_3^2)x_4+b_2x_1,
\end{split}
\label{eqn:cvdp}
\end{equation}
where $x=(u_1,\dot{u}_1,u_2,\dot{u}_2)$,
  and $\delta_j,a_j,b_j,c\in\Rset$, $j=1,2$, and $c>0$ are constants.
The coupled van der Pol oscillators such as \eqref{eqn:cvdp}
 have attracted much attention in the field of dynamical systems
 and for instance, their dynamics and bifurcations
 with $a_j=\delta_j>0$, $j=1,2$, or $a_j=\delta_1>0$
 were studied in \cite{CR88,IOSK04,LRS03,PFG14,RH80,SR82,SR00}.
For simplicity we assume that $a_j,b_j>0$, $j=1,2$, and $c>1$.

When $\delta_j=0$, $j=1,2$, and $b_1b_2<c$,
 the system~\eqref{eqn:cvdp} satisfies condition~(II) with
\begin{align*}
\omega_1=\sqrt{\frac{(c+1)-\sqrt{(c-1)^2+4b_1b_2}}{2}},\quad
\omega_2=\sqrt{\frac{(c+1)+\sqrt{(c-1)^2+4b_1b_2}}{2}}
\end{align*}
if $\omega_1/\omega_2\not\in\Qset$.
We easily see see that $\omega_1<1<\omega_2$ and that
\begin{equation}
\omega_1^2+\omega_2^2=c+1,\quad
\omega_1^2\omega_2^2=c-b_1b_2.
\label{eqn:5brel}
\end{equation}
We compute the coefficients in \eqref{eqn:pd2t} as
\begin{equation}
\begin{split}
&
\alpha_1=\frac{a_1b_1(\omega_2^2-1)^2+a_2b_2(\omega_1^2-1)^2}
 {2b_2\omega_1^2(\omega_1^2-1)(\omega_2^2-\omega_1^2)},\quad
\alpha_2=\frac{(\omega_1^2-1)(a_1b_1+a_2b_2)}{b_2\omega_2^2(\omega_2^2-\omega_1^2)},\\
&
\alpha_3=-\frac{(\omega_2^2-1)(a_1b_1+a_2b_2)}{b_2\omega_1^2(\omega_2^2-\omega_1^2)},\quad
\alpha_4=-\frac{a_1b_1(\omega_1^2-1)^2+a_2b_2(\omega_2^2-1)^2}
 {2b_2\omega_2^2(\omega_2^2-1)(\omega_2^2-\omega_1^2)},\\
 &
\beta_j=0,\quad
j=1,2,3,4.
\end{split}
\label{eqn:5b}
\end{equation}
See Appendix~A.2 for the derivation of \eqref{eqn:5b}.
Using \eqref{eqn:5brel},
 we see that the condition $\alpha_1\neq\alpha_3$ becomes
\begin{equation}
a_1b_1((\omega_2^2-1)^2-2b_1b_2)+a_2b_2((\omega_2^2-1)^2-2b_1b_2)\neq 0;
\label{eqn:5b1}
\end{equation}
$\alpha_2\alpha_3-\alpha_1\alpha_4\neq 0$ becomes
\begin{align}
&
(3a_1^2b_1^2+8a_1a_2b_1b_2+3a_2^2b_2^2-2)b_1^2b_2^2
-4b_1b_2(c-1)^2-(c-1)^4\neq 0;
\label{eqn:5b2}
\end{align}
$\alpha_2\alpha_4>0$ becomes
\begin{equation}
a_1b_1(\omega_1^2-1)^2+a_2b_2(\omega_2^2-1)^2>0;
\label{eqn:5b3}
\end{equation}
and $\alpha_2\alpha_4<0$ and $\alpha_2/\alpha_4\not\in\Qset$ become
\begin{equation}
a_1b_1(\omega_1^2-1)^2+a_2b_2(\omega_2^2-1)^2<0,\quad
\frac{(a_1b_1+a_2b_2)b_1b_2}
 {a_1b_1(\omega_1^2-1)^2+a_2b_2(\omega_2^2-1)^2}\not\in\Qset.
\label{eqn:5b4}
\end{equation}
Note that $\alpha_j\neq 0$ since $a_j,b_j>0$, $j=1,2$.
Applying Theorem~\ref{thm:main2}, we obtain the following.

\begin{prop}
\label{prop:5b}
When $\delta_j=0$, $a_j,b_j>0$, $j=1,2$, $b_1b_2<c$ and $\omega_1/\omega_2\not\in\Qset$,
 the coupled van der Pol oscillators \eqref{eqn:cvdp}
 are not real-analytically integrable near the origin
 if condition~\eqref{eqn:5b1} and one of conditions~\eqref{eqn:5b2}-\eqref{eqn:5b4} hold.
\end{prop}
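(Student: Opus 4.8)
The plan is to check that, under the stated hypotheses, the system~\eqref{eqn:cvdp} falls into case~(II) and then to verify the hypotheses of Theorem~\ref{thm:main2}, after which the conclusion follows immediately. First I would linearize \eqref{eqn:cvdp} at the origin with $\delta_1=\delta_2=0$: the pair of second-order equations $\ddot{u}_1+u_1=b_1u_2$, $\ddot{u}_2+cu_2=b_2u_1$ yields the characteristic polynomial $(\lambda^2+1)(\lambda^2+c)-b_1b_2=\lambda^4+(c+1)\lambda^2+(c-b_1b_2)$. Writing $s=\lambda^2$ and solving the resulting quadratic shows that its roots are $s=-\omega_1^2,-\omega_2^2$ with $\omega_1,\omega_2$ as displayed before the proposition; the assumption $b_1b_2<c$ forces both $\omega_j^2>0$, so that all four eigenvalues $\pm i\omega_1,\pm i\omega_2$ are purely imaginary, and $\omega_1/\omega_2\not\in\Qset$ places us in case~(II). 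Vieta's formulas for the quadratic in $s$ give the identities \eqref{eqn:5brel}, which I record for later use.

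Once case~(II) is established, \eqref{eqn:cvdp} is formally transformed to a Poincar\'e-Dulac normal form~\eqref{eqn:pd2} with $O(|x|^3)$-truncation \eqref{eqn:pd2t}, and the task reduces to computing the coefficients $\alpha_j,\beta_j$. This computation---diagonalizing the linear part, expressing the cubic nonlinearity in the resonant complex coordinates, and collecting the resonant cubic monomials---I would relegate to Appendix~A.2, where it produces the formulas \eqref{eqn:5b}. Two features are worth noting: the absence of quadratic terms in \eqref{eqn:cvdp} means there is no lower-order normal-form contribution to account for, and the dissipative structure of the van der Pol nonlinearity produces no cubic frequency shift, so that $\beta_j=0$, $j=1,\dots,4$, consistent with the remark after Theorem~\ref{thm:main2} that the $\beta_j$ do not affect the conclusion.

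It then remains to rewrite the three hypotheses of Theorem~\ref{thm:main2} in terms of the original parameters. Since $a_j,b_j>0$ and $\omega_1<1<\omega_2$, inspection of \eqref{eqn:5b} shows that each numerator is nonzero, so every $\alpha_j\neq 0$ and in particular the ratio in condition~(iii) is well defined. Substituting \eqref{eqn:5b} and repeatedly applying \eqref{eqn:5brel} to eliminate $\omega_1,\omega_2$ in favor of $c$ and $b_1b_2$, the nondegeneracy condition $\alpha_1\neq\alpha_3$ becomes \eqref{eqn:5b1}, condition~(i) becomes \eqref{eqn:5b2}, condition~(ii) becomes \eqref{eqn:5b3}, and condition~(iii) becomes \eqref{eqn:5b4}. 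With these translations established, Theorem~\ref{thm:main2} applies directly and gives the asserted real-analytic nonintegrability near the origin. The main obstacle is purely computational: the normal-form reduction behind \eqref{eqn:5b} must correctly resolve the resonances of the four purely imaginary eigenvalues, and the passage from \eqref{eqn:5b} to the symmetric expression \eqref{eqn:5b2} is the most error-prone step, since it requires clearing common denominators and invoking $\omega_1^2+\omega_2^2=c+1$ and $\omega_1^2\omega_2^2=c-b_1b_2$ several times.
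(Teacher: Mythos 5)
Your proposal follows the paper's own route exactly: verify that $\delta_j=0$, $b_1b_2<c$ and $\omega_1/\omega_2\not\in\Qset$ place \eqref{eqn:cvdp} in case~(II) with $\omega_{1,2}$ determined by $\lambda^4+(c+1)\lambda^2+(c-b_1b_2)=0$ (whence \eqref{eqn:5brel} by Vieta), compute the normal-form coefficients \eqref{eqn:5b} as in Appendix~A.2 (with $\beta_j=0$), translate $\alpha_1\neq\alpha_3$ and conditions (i)--(iii) of Theorem~\ref{thm:main2} into \eqref{eqn:5b1}--\eqref{eqn:5b4} using \eqref{eqn:5brel} and the ordering $\omega_1<1<\omega_2$, and conclude by Theorem~\ref{thm:main2}. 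The argument is correct and essentially identical to the paper's.
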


\section*{Acknowledgments}
The author thanks Hidekazu Ito, Mitsuru Shibayama and Shoya Motonaga
 for their helpful discussions and useful comments.
This work was partially supported by the JSPS KAKENHI Grant Numbers
 JP17H02859 and JP22H01138.

\section*{Data Availability}
Data sharing not applicable to this article
 as no datasets were generated or analyzed during the current study.


\appendix

\renewcommand{\theequation}{\Alph{section}.\arabic{equation}}
\setcounter{equation}{0}

\section{Derivation of \eqref{eqn:5a} and \eqref{eqn:5b}}

We compute the coefficients in \eqref{eqn:pd1t} and \eqref{eqn:pd2t}
 for the examples in Sections~5.1 and 5.2,
 and derive \eqref{eqn:5a} and \eqref{eqn:5b}.
For the reader's convenience,
 we also give general formulas for computing these coefficients. 
See Sections~8.7.5 and 8.7.6 of \cite{K04} for the details.
We write the Taylor expansion of $f(x)$ around $x=0$ as
\[
f(x)=Ax+\tfrac{1}{2}B(x,x)+\tfrac{1}{6}C(x,x,x)+O(|x|^4),
\]
where $A=\D f(0)$,
 and $B(\xi,\eta)$ and $C(\xi,\eta,\zeta)$ are the bilinear and trilinear vector-values functions
 with components
\[
B_j(\xi,\eta)
=\sum_{k,l=1}^n\frac{\partial^2 f_j}{\partial x_k\partial x_l}(0)\xi_k\eta_l,\quad
C_j(\xi,\eta,\zeta)
 =\sum_{k,l,m=1}^n\frac{\partial^3 f_j}{\partial x_k\partial x_l\partial x_m}(0)
 \xi_k\eta_l\zeta_m
\]
for $ j=1,\ldots,n$ with $n=3$ or $4$.

\subsection{Derivation of \eqref{eqn:5a}}
Assume that $f(x)$ satisfies condition (I).
Let $v_0\in\Rset^3$ and $v_1\in\Cset^3$ be eigenvectors of $A$
 corresponding to the eigenvalues $\lambda=0$ and $i\omega$, respectively,
\[
Av_0=0,\quad
Av_1=i\omega v_1,
\]
and let $u_0\in\Rset^3$ and $u_1\in\Cset^3$ be eigenvectors of $A^\T$
 corresponding to the eigenvalues $\lambda=0$ and $-i\omega$, respectively,
\[
A^\T u_0=0,\quad
A^\T u_1=-i\omega u_1,
\]
such that $\langle u_0,v_0\rangle=\langle u_1,v_1\rangle=1$,
 where  $\langle\cdot,\cdot\rangle$ represents the inner product in $\Cset^n$.
Then we can transform \eqref{eqn:sys} to
\begin{equation}
\begin{split}
&
\dot{w}_0=\kappa_{01}w_0^2+\kappa_{02}|w_1|^2+O(|w_0|^3+|w_1|^3),\\
&
\dot{w}_1=i\omega w_1+\kappa_{11} w_0w_1+O(|w_0|^3+|w_1|^3),
\end{split}
\label{eqn:a1}
\end{equation}
where $w_0=\langle u_0,x\rangle\in\Rset$, $w_1=\langle u_1,x\rangle\in\Cset$ and
\begin{equation}
\begin{split}
&
\kappa_{01}=\tfrac{1}{2}\langle u_0,B(v_0,v_0)\rangle\in\Rset,\quad
\kappa_{02}=\langle u_0,B(v_1,v_1^\ast)\rangle\in\Rset,\\
&
\kappa_{11}=\langle u_1,B(u_0,v_1)\rangle\in\Cset
\end{split}
\label{eqn:a1c}
\end{equation}
with the superscript `$\ast$' denoting complex conjugate.
Letting $x_1=\Re\,w_1$, $x_2=\Im\,w_1$ and $x_3=w_0$,
 we rewrite \eqref{eqn:a1} as \eqref{eqn:pd1t} with 
\[
\alpha_1=\Re\,\kappa_{11},\quad
\alpha_2=\Im\,\kappa_{11},\quad
\alpha_3=\kappa_{02},\quad
\alpha_4=\kappa_{01}
\]
up to $O(|x|^2)$.

We now  compute the coefficients $\alpha_j$, $j=1$-$4$, for \eqref{eqn:cvdp}
 when $b=1$ and $c=a\in(-\sqrt{2},\sqrt{2})$.
We have
\[
A=
\begin{pmatrix}
0 & -1 & -1\\
1 & a & 0\\
1 & 0 & -a
\end{pmatrix},\quad
B(\xi,\eta)=
\begin{pmatrix}
0\\
0\\
\xi_1\eta_3+\xi_3\eta_1
\end{pmatrix}
\]and
\begin{align*}
&
v_0=
\begin{pmatrix}
a\\
-1\\
1
\end{pmatrix},\quad
v_1=
\begin{pmatrix}
a+i\omega\\
1-a^2-i\omega a\\
1
\end{pmatrix},\\
&
u_0=\frac{1}{\omega^2}
\begin{pmatrix}
-a\\
-1\\
1
\end{pmatrix},\quad
u_1=\frac{1}{2\omega^2(a^2-1-i\omega a)}
\begin{pmatrix}
a-i\omega\\
a^2-1-i\omega a\\
-1
\end{pmatrix}.
\end{align*}
By \eqref{eqn:a1c} we obtain
\begin{align*}
\kappa_{01}=\frac{a}{\omega^2},\quad
\kappa_{02}=\frac{2a}{\omega^2},\quad
\kappa_{11}=
 \frac{-a^3+i\omega(a^2+1)}{2\omega^2}
\end{align*}
which yields \eqref{eqn:5a}.

\subsection{Derivation of \eqref{eqn:5b}}

Assume that $f(x)$ satisfies condition (II).
For simplicity we also assume that $B(x,x)\equiv 0$.
For $j=1,2$, let $v_j\in\Cset^4$ be an eigenvector of $A$
 corresponding to the eigenvalue $\lambda=i\omega_j$,
\[
Av_j=i\omega_j v_j,
\]
and let $u_j\in\Cset^4$ be an eigenvector of $A^\T$
 corresponding to the eigenvalue  $-i\omega_j$,
\[
A^\T u_j=-i\omega_j u_j,
\]
such that $\langle u_j,v_j\rangle=1$.
Then we can transform \eqref{eqn:sys} to
\begin{equation}
\begin{split}
&
\dot{w}_1=i\omega_1w_1+\kappa_{11}w_1|w_1|^2+\kappa_{12}w_1|w_2|^2
 +O(|w_1|^4+|w_1|^4),\\
&
\dot{w}_2=i\omega_2 w_2+\kappa_{21}w_2|w_1|^2+\kappa_{21}w_2|w_1|^2
 +O(|w_1|^4+|w_1|^4),
\end{split}
\label{eqn:a2}
\end{equation}
where $w_j=\langle u_j,x\rangle\in\Cset$, $j=1,2$, and
\begin{equation}
\begin{split}
&
\kappa_{11}=\tfrac{1}{2}\langle u_1,C(v_1,v_1,v_1^\ast)\rangle,\quad
\kappa_{12}=\langle u_1,C(v_1,v_2,v_2^\ast)\rangle,\\
&
\kappa_{21}=\langle u_2,C(v_1,v_1^\ast,v_2)\rangle,\quad
\kappa_{22}=\tfrac{1}{2}\langle u_2,C(v_2,v_2,v_2^\ast)\rangle
\end{split}
\label{eqn:a2c}
\end{equation}
are also complex.
Letting $x_1=\Re\,w_1$, $x_2=\Im\,w_1$, $x_3=\Re\,w_2$ and $x_4=\Im\,w_2$,
 we rewrite \eqref{eqn:a2} as \eqref{eqn:pd2} with 
\begin{align*}
&
\alpha_1=\Re\,\kappa_{11},\quad
\alpha_2=\Re\,\kappa_{12},\quad
\alpha_3=\Re\,\kappa_{21},\quad
\alpha_4=\Re\,\kappa_{22},\\
&
\beta_1=\Im\,\kappa_{11},\quad
\beta_2=\Im\,\kappa_{12},\quad
\beta_3=\Im\,\kappa_{21},\quad
\beta_4=\Im\,\kappa_{22}
\end{align*}
up to $O(|x|^3)$.

We now  compute the coefficients $\alpha_j,\beta_j$, $j=1$-$4$,
 for \eqref{eqn:rsys} with $\delta_j=0$, $j=1,2$, and $b_1b_2<c$.
Note that $B(\xi,\eta)\equiv0$.
We have
\[
A=
\begin{pmatrix}
0 & 1 & 0 & 0\\
-1 & 0 & b_1 & 0\\
0 & 0 & 0 & 1\\
b_2 & 0 & -c & 0
\end{pmatrix},\quad
C(\xi,\eta,\zeta)=
\begin{pmatrix}
0\\
-2a_1(\xi_1\eta_1\zeta_2+\xi_1\eta_2\zeta_1+\xi_2\eta_1\zeta_1)\\
0\\
-2a_2(\xi_3\eta_3\zeta_4+\xi_3\eta_4\zeta_3+\xi_4\eta_3\zeta_3)
\end{pmatrix}
\]
and
\begin{align*}
&
v_1=
\begin{pmatrix}
ib_1/(\omega_1(\omega_1^2-1))\\
-b_1/(\omega_1^2-1)\\
-i/\omega_1\\
1
\end{pmatrix},\quad
u_1=\frac{\omega_1^2-1}{2(\omega_1^2-\omega_2^2)}
\begin{pmatrix}
-i\omega_1(\omega_2^2-1)/b_1\\
(\omega_2^2-1)/b_1\\
-i\omega_1\\
1
\end{pmatrix},\\
&
v_2=
\begin{pmatrix}
ib_1/(\omega_2(\omega_2^2-1))\\
-b_1/(\omega_2^2-1)\\
-i/\omega_2\\
1
\end{pmatrix},\quad
u_2=\frac{\omega_2^2-1}{2(\omega_2^2-\omega_1^2)}
\begin{pmatrix}
-i\omega_2(\omega_1^2-1)/b_1\\
(\omega_1^2-1)/b_1\\
-i\omega_2\\
1
\end{pmatrix}.
\end{align*}
Using \eqref{eqn:5brel} and \eqref{eqn:a2c}, we obtain
\begin{align*}
&
\kappa_{11}=\frac{a_1b_1(\omega_2^2-1)^2+a_2b_2(\omega_1^2-1)^2}
 {2b_2\omega_1^2(\omega_1^2-1)(\omega_2^2-\omega_1^2)},\\
&
\kappa_{12}=\frac{(\omega_1^2-1)(a_1b_1+a_2b_2)}{b_2\omega_2^2(\omega_2^2-\omega_1^2)},\\
&
\kappa_{21}=-\frac{(\omega_2^2-1)(a_1b_1+a_2b_2)}{b_2\omega_1^2(\omega_2^2-\omega_1^2)},\\
&
\kappa_{22}=-\frac{a_1b_1(\omega_1^2-1)^2+a_2b_2(\omega_2^2-1)^2}
 {2b_2\omega_2^2(\omega_2^2-1)(\omega_2^2-\omega_1^2)},
\end{align*}
which yields \eqref{eqn:5b}.


\end{document}